\newcounter{dummy} \numberwithin{dummy}{section}
\newtheorem{theorem}[dummy]{Theorem}
\newtheorem{lemma}[dummy]{Lemma}
\newtheorem{remark}[dummy]{Remark}
\newtheorem{corollary}[dummy]{Corollary}
\newtheorem{prop}[dummy]{Proposition}
\newtheorem*{theorem*}{Theorem}
\newtheorem{wsinger}[dummy]{Conjecture}
\newtheorem*{Aconj}{Strong Atiyah Conjecture}
\theoremstyle{remark}
\newtheorem{example}{Example}[section]
\newcommand{\Q}{\mathbf{q}}
\newcommand{\T}{\mathbf{t}}
\DeclareMathOperator{\im}{im}
\DeclareMathOperator{\Lk}{Lk}
\DeclareMathOperator{\lcm}{lcm}
\DeclareMathOperator{\Link}{Link}
\DeclareMathOperator{\St}{St}
\newcommand{\vertex}{\node[vertex]}
\date{}
\title{Coxeter Groups, Ruins, and Weighted $L^2$-cohomology}
\author{Wiktor Mogilski}
\address{Department of Mathematical Sciences, Binghamton University, Binghamton, NY 13902, USA }
\email{mogilski@math.binghamton.edu}
\thanks{}
\author{Kevin Schreve}
\address{Department of Mathematics, University of Michigan, 530 Church St., Ann Arbor, MI 48109, USA}
\email{schreve@umich.edu}
\subjclass[2010]{Primary: 20F55; 20C08, 20E42, 20F65, 20J06, 46L10, 51E24, 57M07, 58J22}
\keywords{Coxeter group, weighted $L^2$-cohomology, Singer conjecture, Atiyah conjecture}
\date{}
\dedicatory{}
\begin{document}

\begin{abstract}

Given a Coxeter system $(W,S)$ and a multiparameter $\Q$ of real numbers indexed by $S$, one can define the weighted $L^2$-cohomology groups and associate to them a nonnegative real number called the weighted $L^2$-Betti number. We show that for ranges of $\Q$ depending on certain subgroups of $W$, the weighted $L^2$-cohomology groups of $W$ are concentrated in low dimensions. We then prove new vanishing results for the weighted $L^2$-cohomology of certain low-dimensional Coxeter groups. Our arguments rely on computing the $L^2$-cohomology of certain complexes called ruins, as well as the resolution of the Strong Atiyah Conjecture for hyperbolic Coxeter groups. We conclude by extending to the weighted setting the computations of Davis and Okun for the case where the nerve of a right-angled Coxeter group is the barycentric subdivision of a PL-cellulation of an $(n-1)$-manifold with $n=6,8$.
\end{abstract}
\maketitle

\section{Introduction}


 $L^2$-cohomology can be defined for any CW-complex with a proper and cocompact action by an infinite discrete group. Heuristically, it comes from restricting the cellular cochains to the subspace which are square summable. These cochains naturally form a Hilbert space, and this additional structure combined with the group action allows us to put a dimension on these cohomology groups, the \emph{$L^2$-Betti numbers}.

In this paper, we will study the $L^2$-cohomology of Coxeter groups acting on the Davis complex. In fact, in this setting there is a more general \emph{weighted } cohomology theory developed in \cite{Dymara} and \cite{DDJO}. The idea behind this is natural and studied outside the Coxeter group setting: one can modify the natural norm on chains by a weight function and consider the complex of associated square summable chains. One can think of these groups as follows: when the weight function is ``small'', it is easy to be square summable, and weighted $L^2$-cohomology behaves like ordinary cohomology. On the other hand, when the weight function is ``large'', it is difficult to be square summable, and  weighted $L^2$-cohomology behaves like cohomology with compact support.

However, the Coxeter group setting is unique in that one can define  \emph{weighted $L^2$-Betti numbers}. Weighted $L^2$-Betti numbers not only tie the theory of weighted $L^2$-cohomology to algebraic properties of the Coxeter group, but they also have interesting connections with other topics such as buildings, Hecke algebras, growth series, and the Hopf Conjecture concerning Euler characteristics of aspherical manifolds.

The results of this article can be summarized as follows.

\begin{itemize}
\item Using a spectral sequence argument, we show that for ranges of weights depending on certain subgroups of the Coxeter group, the weighted $L^2$-cohomology of the Davis complex is concentrated in low dimensions (Theorem \ref{thm:l2kskeleton}). Our result generalizes a theorem of Dymara \cite[Theorem 10.3]{Dymara}.

\item We use the Strong Atiyah Conjecture and the above argument to compute (in some cases) the weighted $L^2$-cohomology for a certain subcomplex of $\Sigma(W,S)$ called a ruin. This allows us to derive vanishing results for the weighted $L^2$-cohomology of many low-dimensional Coxeter groups (Theorems \ref{thm:AtiyahKn} and \ref{thm:Atiyahcomputation}).

\item We extend (to the weighted setting) a computation of Davis-Okun \cite[Theorem 3.1]{DO3} of the $L^2$-cohomology of right-angled Coxeter groups whose nerves are barycentric subdivisions of PL $n$-manifolds for $n = 6,8$. A consequence of this is that the Weighted Singer Conjecture is true for all even $n\leq 8$ whenever the Coxeter group is right-angled and the corresponding nerve is the barycentric subdivision of a PL-cellulation of a sphere.

\end{itemize}

\emph{Acknowledgements} We would like to thank Boris Okun for many helpful discussions.

\section{Coxeter Groups and the Davis Complex}

\subsection{Coxeter groups}
A \emph{Coxeter matrix} $M=(m_{st})$ on a set $S$ is an $S\times S$ symmetric matrix with entries in $\mathbb{N}\cup\{\infty\}$ such that
$$m_{st}=\begin{cases} 1 &\mbox{if } s=t \\
\geq 2 & \mbox{otherwise.} \end{cases}$$
One can associate to $M$ a presentation for a group $W$ as follows. Let $S$ be the set of generators and let $\mathcal{I}=\{(s,t)\in S\times S \mid m_{st}\neq\infty\}.$ The set of relations for $W$ is $$R=\{(st)^{m_{st}}\}_{(s,t)\in\mathcal{I}}.$$
The group defined by the presentation $\left<S,R\right>$ is a \emph{Coxeter group} and the pair $(W,S)$ is a \emph{Coxeter system}. If all off-diagonal entries of $M$ are either $2$ or $\infty$, then $W$ is \emph{right-angled}.

Given a subset $T\subset S$, define $W_T$ to be the subgroup of $W$ generated by the elements of $T$. Then $(W_T,T)$ is a Coxeter system. Subgroups of this form are \emph{special subgroups}. $W_T$ is a \emph{spherical subgroup} if $W_T$ is finite and, in this case, $T$ is a \emph{spherical subset}. We will let $\mathcal{S}$ denote the poset of spherical subsets (the partial order being inclusion).

Given $w\in W$, call an expression $w=s_1s_2\dotsm s_n$ \emph{reduced} if there exists no integer $k<n$ with $w=s_1's_2'\dotsm s_k'.$ We define the \emph{length} of $w$, denoted by $l(w)$, to be the integer $n$ so that $w=s_1s_2\dotsm s_n$ is a reduced expression for $w$.

If $T\subset S$ and $w\in W$, the coset $wW_T$ contains a unique element of shortest length. This element $w$ is said to be \emph{$(\emptyset, T)$-reduced}.

If $(W,S)$ is a Coxeter system, let $\T := (t_s)_{s \in S}$ denote an $S$-tuple of indeterminates, where $t_s = t_{s'}$ if $s$ and $s'$ are conjugate in $W$. If $s_1s_2 \dots s_n$ is a reduced expression for $w$, let $t_w$ be the monomial $t_w := t_{s_1}t_{s_2}\dots t_{s_n}$.

Note that $t_w$ is independent of choice of reduced expression due to Tits' solution to the word problem for Coxeter groups (see the discussion at the beginning of \cite[Chapter 17]{Davis}). The \emph{growth series} of $W$ is the power series in $\mathbf{t}$ defined by $$W(\mathbf{t})=\sum_{w\in W} t_w.$$

The \emph{region of convergence} $\mathcal{R}$ for $W(\mathbf{t})$ is defined to be $$\mathcal{R}:=\{\mathbf{t}\in (0,+\infty)^S \mid W(\mathbf{t}) \text{ converges}\}.$$

For each $T\subset S$, we denote the growth series of the special subgroup $W_T$ by $W_T(\mathbf{t})$, the respective region of convergence by $\mathcal{R}_T$, and define $\mathbf{t}^{-1}:=(t_s^{-1})_{s\in S}$. 



\subsection{The basic construction} A \emph{mirror structure} over a set $S$ on a space $X$ is a family of subspaces $(X_s)_{s\in S}$ indexed by $S$. We say $X$ is a \emph{mirrored space over $S$}. Suppose that $(W,S)$ is a Coxeter system and that $X$ is a mirrored space over $S$. Put $S(x):=\{s\in S\mid x\in X_s\}$ and define an equivalence relation $\sim$ on $W\times X$ by $(w,x)\sim (w',y)$ if and only if $x=y$ and $w^{-1}w'\in W_{S(x)}$. Let $\mathcal{U}(W,X)$ denote the quotient space: $$\mathcal{U}(W,X)=(W\times X)/ \sim.$$
$\mathcal{U}(W,X)$ is the \emph{basic construction}. There is a natural $W$-action on $W\times X$ which respects the equivalence relation, and hence descends to an action on $\mathcal{U}(W,X)$.

\subsection{The Davis complex}\label{section:coxcell}

Recall that $\mathcal{S}$ the set of all spherical subsets of $S$. Let $K$ denote the geometric realization of the poset $\mathcal{S}$ and $L$ the geometric realization of the abstract simplicial complex $\mathcal{S}$. $K$ is the \textit{Davis chamber} and $L$ is called the \textit{nerve} of $(W,S)$. Note that $K$ is the cone on the barycentric subdivision of $L$ with cone point corresponding to $\emptyset$.

For each $s\in S$ let $$K_s:=|\mathcal{S}_{\geq\{s\}}|.$$ 
The family $(K_s)_{s\in S}$ is a mirror structure on $K$.

The \textit{Davis complex} $\Sigma(W,S)$ associated to the nerve $L$ is defined to be $\Sigma(W,S):=\mathcal{U}(W,K)$. Note that $\Sigma(W,S)$ is a simplicial complex. It is proved in \cite{Davis1} that $\Sigma(W,S)$ is contractible. Furthermore, if $L$ is a triangulation of an $(n-1)$-sphere, then $\Sigma(W,S)$ is an $n$-manifold.

The Davis complex admits a decomposition into \emph{Coxeter cells} as follows. For each $T\in\mathcal{S}$, let $v_T$ denote the corresponding barycenter in $K$. Let $c_T$ denote the union of simplices $c\subset\Sigma(W,S)$ such that $c\cap K_T=v_T$. The boundary of $c_T$ is cellulated by $wc_U$, where $w\in W_T$ and $U\subset T$. With its simplicial structure, the boundary $\partial c_T$ is the Coxeter complex corresponding to the Coxeter system $(W_T,T)$, which is a sphere since $W_T$ is finite. It follows that $c_T$ and its translates are cells, which are called the \emph{Coxeter cells of type $T$}. We denote $\Sigma(W,S)$ with this cellular decomposition by $\Sigma_{cc}(W,S)$. Note that $\Sigma_{cc}(W,S)$ is a regular CW-complex such that the poset of cells that can be identified with $W\mathcal{S}:=\{wW_U\mid w\in W, T\in\mathcal{S}\}$. The simplicial structure on $\Sigma(W,S)$ is the geometric realization of the poset $W\mathcal{S}$, hence $\Sigma(W,S)$ is the barycentric subdivision of $\Sigma_{cc}(W,S)$.

\section{Weighted $L^2$-cohomology}

In this section we give a brief introduction to weighted $L^2$-cohomology. Further details can be found in \cite{Davis}, \cite{DDJO}, \cite{Dymara} and \cite{eckmann}. We then compile some results about the weighted $L^2$-cohomology of the Davis complex $\Sigma(W,S)$.

Let $(W,S)$ be a Coxeter system. For the remainder of this article, let $\Q=(q_s)_{s\in S}$ denote an $S$-tuple of positive real numbers satisfying $q_s=q_{s'}$ whenever $s$ and $s'$ are conjugate in $W$. Set $\Q^{-1}=(q^{-1}_s)_{s\in S}$. If $w=s_1\cdot\cdot\cdot s_n$ is a reduced expression for $w\in W$, we define $q_w:=q_{s_1}\cdot\cdot\cdot q_{s_n}.$

\subsection{Hecke-von Neumann algebras.} Let $\mathbb{R}W$ denote the group algebra of $W$, and let $\{e_w\}_{w\in W}$ denote the standard basis on $\mathbb{R}W$ (here $e_w$ denotes the characteristic function of $\{w\}$). Given a multiparameter $\Q$ of positive real numbers as above, we deform the standard inner product on $\mathbb{R}W$ to an inner product $$\left<e_w,e_{w'}\right>_\Q=\left\{
                \begin{array}{ll}
                  q_w, & \hbox{$w=w'$;} \\
                  0, & \hbox{otherwise.}
                \end{array}
              \right.
$$

Using the multiparameter $\Q$, one can give $\mathbb{R}W$ the structure of a \textit{Hecke algebra}. We denote $\mathbb{R}W$ with this Hecke algebra structure by $\mathbb{R}_\Q (W)$, and $L^2_\Q(W)$ will denote the Hilbert space completion of $\mathbb{R}_\Q (W)$ with respect to $\left<\hskip1mm,\hskip1mm\right>_\Q$. There is a natural anti-involution on $\mathbb{R}_\Q W$, which implies that there is an associated \textit{Hecke-von Neumann algebra} $\mathcal{N}_\Q(W)$ acting on the right on $L^2_\Q(W)$. It is the algebra of all bounded linear endomorphisms of $L^2_\Q(W)$ which commute with the left $\mathbb{R}_\Q (W)$-action.

Define the $\textit{von Neumann trace}$ of $\phi\in\mathcal{N}_\Q(W)$ by $\textnormal{tr}_{\mathcal{N}_\Q}(\phi):=\left<e_1\phi,e_1\right>_\Q$. This extends to a trace on $(n\times n)$-matrices with coefficients in $\phi\in\mathcal{N}_\Q(W)$ by taking the sum of the traces of diagonal elements.

Let $V\subseteq (L^2_\Q(W))^n$ be invariant under the $W$-action. The orthogonal projection $p_V: (L^2_\Q(W))^n\rightarrow (L^2_\Q(W))^n$ is in $\mathcal{N}_\Q(W)$. Therefore, we can attribute to $V$ a nonnegative real number called the \textit{von Neumann dimension}.

\subsection{Weighted $L^2$-cohomology.} Suppose $(W,S)$ is a Coxeter system and that $X$ is a mirrored finite $CW$-complex over $S$. Set $\mathcal{U}=\mathcal{U}(W,X)$. We first orient the cells of $X$ and equivariantly extend this orientation to $\mathcal{U}$ in such a way so that if $\sigma$ is a positively oriented cell of $X$, then $w\sigma$ is positively oriented for each $w\in W$.

We define a measure on the $w$-orbit of an $i$-cell $\sigma\in X$ by $$\mu_\Q(w\sigma)=q_u,$$ where $u$ is $(\emptyset, S(\sigma))$-reduced, $S(\sigma):=\{s\in S|\sigma\subseteq X_s\}.$ This extends to a measure on the $i$-cells $\mathcal{U}^{(i)}$, which we also denote by $\mu_\Q$.

There is a weighted inner product given by $$\left<f,g\right>_\Q=\sum_\sigma f(\sigma)g(\sigma)\mu_\Q(\sigma),$$ and we denote the induced norm by $||\text{ }||_\Q$.

Define the \textit{$\Q$-weighted $i$-dimensional $L^2$-(co)chains}, denoted by $L_\Q^2C_i(\mathcal{U})$,  to be the Hilbert space of square-summable (with respect to the weighted inner product) $i$-(co)chains.

The boundary map $\partial_i: L_\Q^2C_i(\mathcal{U})\rightarrow L_\Q^2C_{i-1}(\mathcal{U})$ and coboundary map $\delta^i: L_\Q^2C_i(\mathcal{U})\rightarrow L_\Q^2C_{i+1}(\mathcal{U})$ are defined by the usual formulas, however there is one caveat: they are not adjoints with respect to this inner product whenever $\Q\neq\mathbf{1}$. One remedies this issue by perturbing the boundary map $\partial_i$ to $\partial_i^\Q$: $$\partial_i^\Q(f)(\sigma^{i-1})=\sum_{\sigma^{i-1}\subset\alpha^{i}} [\sigma:\alpha]\mu_\Q(\alpha)\mu_\Q^{-1}(\sigma) f(\alpha).$$

A simple computation shows that $\partial_i^\Q$ is the adjoint of $\delta$ with respect to the weighted inner product, hence $\left(L_\Q^2C_\ast(\mathcal{U}),\partial_i^\Q\right)$ is also a chain complex. The \textit{reduced $\Q$-weighted $L^2$-cohomology} is defined to be $$L_\Q^2H_i(\mathcal{U})=\ker \partial_i^\Q/\overline{\im \partial_{i+1}^\Q},$$ $$L_\Q^2H^i(\mathcal{U})=\ker\delta^i/\overline{\im\delta^{i-1}}.$$

\begin{lemma} We recall some properties of weighted $L^2$-cohomology \cite{DDJO}.

\begin{itemize}

\item (Hodge decomposition): $L_\Q^2H^i(\mathcal{U})= (\ker \delta^i \cap \partial_i^{\Q}) \oplus \overline{\im \delta^{i-1}} \oplus \overline{\im \partial^{\Q}_{i+1}}$

\item $L_\Q^2H^i(\mathcal{U}) \cong L_\Q^2H_i(\mathcal{U})$

\item (Poincar\'{e} Duality): $L_\Q^2H_i(\mathcal{U})\cong L_{\Q^{-1}}^2H_{n-i}(\mathcal{U},\partial\mathcal{U}).$

\item (Invariance of cellulation)\label{prop:l2coxcell} If $\Sigma_{cc}(W,S)$ denotes $\Sigma(W,S)$ with the Coxeter cellulation, then $L^2_\Q H_\ast(\Sigma(W,S))\cong L^2_\Q H_\ast(\Sigma_{cc}(W,S))$.

\end{itemize}

\end{lemma}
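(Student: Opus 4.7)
The plan is to establish each item as a weighted analogue of a classical $L^2$-cohomology property, following the line of \cite{DDJO}. The Hodge decomposition is the structural heart of the lemma, and the other three items can be assembled from it together with standard topological input, so I would treat them in that order.

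For the Hodge decomposition, I would first observe that $L_\Q^2 C_i(\mathcal{U})$ is a Hilbert space, $\delta^i$ is bounded (the cellulation is locally finite and the weights on each $W$-orbit are uniformly controlled), and $\partial_i^\Q$ was defined precisely as the Hilbert-space adjoint of $\delta^{i-1}$. The standard closed-range/orthogonal-complement argument then yields $L_\Q^2 C_i(\mathcal{U}) = \ker\delta^i \oplus \overline{\im\partial_{i+1}^\Q}$, and intersecting with the analogous splitting for $\partial_i^\Q$ produces the three-term decomposition. The isomorphism $L_\Q^2 H^i(\mathcal{U}) \cong L_\Q^2 H_i(\mathcal{U})$ is then immediate, since both sides are canonically identified with the harmonic space $\ker\delta^i \cap \ker\partial_i^\Q$.

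For Poincar\'e duality, my plan is to construct a chain-level map $L_\Q^2 C_i(\mathcal{U}) \to L_{\Q^{-1}}^2 C_{n-i}(\mathcal{U},\partial\mathcal{U})$ via cap product with a locally finite fundamental class, and check that it is a bounded isomorphism inducing an isomorphism on reduced $L^2$-homology. The key bookkeeping step, which is where the inversion $\Q\mapsto\Q^{-1}$ enters, is the observation that if $\sigma$ is a cell whose $W$-orbit has measure $\mu_\Q(w\sigma)=q_u$ for $u$ the $(\emptyset,S(\sigma))$-reduced representative of $wW_{S(\sigma)}$, then the dual cell inherits the complementary length factor, so its natural measure is proportional to $\mu_{\Q^{-1}}$. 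Once this is verified, the duality isomorphism follows formally.

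For invariance of cellulation, I would use that $\Sigma(W,S)$ is the barycentric subdivision of $\Sigma_{cc}(W,S)$ to exhibit an explicit $W$-equivariant subdivision chain map together with a merging chain-homotopy inverse, and then check both are bounded operators between the corresponding weighted Hilbert chain complexes. I expect this to be the main obstacle: one must distribute the weight $\mu_\Q$ on a Coxeter cell across its barycentric pieces so that the subdivision is a bounded chain-equivalence, rather than merely a topological one. Since each Coxeter cell subdivides into a uniformly bounded number of simplices whose $\mu_\Q$-weights are commensurable (both are controlled by $q_u$ for the same $(\emptyset, S(c_T))$-reduced coset representative), $W$-equivariance together with the standard acyclic-carriers argument should promote the topological subdivision equivalence to an isomorphism on reduced $L_\Q^2$-cohomology.
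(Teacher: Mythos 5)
Your plan is correct and matches the standard arguments; the paper itself gives no proof of this lemma, simply citing \cite{DDJO}, where each item is established essentially along the lines you describe (Hodge decomposition of the chain space via the adjointness of $\partial_i^\Q$ and $\delta^{i-1}$, the identification of both $H^i$ and $H_i$ with the harmonic space, duality via the dual cell structure where the swap $\Q\mapsto\Q^{-1}$ comes from replacing shortest by longest coset representatives in $wW_{S(\sigma)}$, and cellulation-invariance via $W$-equivariant subdivision chain equivalences, whose boundedness is automatic because there are only finitely many $W$-orbits of cells). The only caveat worth recording is that the decomposition as printed is really of the cochain space $L^2_\Q C_i(\mathcal{U})$ rather than of $H^i$, with first summand the harmonic space $\ker\delta^i\cap\ker\partial_i^\Q$ --- which is exactly what your argument produces.
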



The \textit{$i$-th $L_\Q^2$-Betti number of $\mathcal{U}$}, denoted by $L_\Q^2b_i(\mathcal{U})$, is the von Neumann dimension of $L^2_\Q H_i(\mathcal{U})$. We define the \textit{weighted Euler characteristic of $\mathcal{U}$}: $$\chi_\Q(\mathcal{U})=\sum (-1)^iL_\Q^2b_i(\mathcal{U}).$$

\begin{lemma}\label{lemma:propertiesbetti} We record some further properties of weighted $L^2$-Betti numbers

\begin{itemize}
\item (Weighted Atiyah Formula) \cite[Corollary 3.4]{Dymara}: $\chi_\Q(\Sigma(W,S)) = \frac{1}{W(\Q)}$

\item (Weighted Kunneth Formula): $ L_\Q^2b_k(\mathcal{U} \times \mathcal{U}') = \sum_{i + j = k}L_\Q^2b_i(\mathcal{U})L_\Q^2b_k(\mathcal{U}')$

\item ($0$-dimensional homology) \cite[Theorem 10.3]{Dymara} \label{prop:betti0}
$L_\Q^2b_0(\Sigma(W,S))\neq 0$ if and only if $\Q\in\mathcal{R}$. Moreover, when $\Q\in\mathcal{R}$, $L_\Q^2b_\ast(\Sigma(W,S))$ is concentrated in dimension $0$.

\item (Top homology) \cite[Lemma 4.8]{wm2} \label{Pushing0}
Suppose that $\Sigma(W,S)$ is $n$-dimensional and that $L^2_\mathbf{1}b_n(\Sigma(W,S))=0$. Then $L^2_\Q b_n(\Sigma(W,S))=0$ for $\mathbf{q}\leq\mathbf{1}$.

\end{itemize}

\end{lemma}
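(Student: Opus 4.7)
The plan is to verify the four bullet points in turn. The first three are essentially quoted from their references: the Weighted Atiyah Formula is \cite[Corollary 3.4]{Dymara}, the $0$-dimensional computation is \cite[Theorem 10.3]{Dymara}, and the Weighted Kunneth Formula follows from the standard Hilbert tensor-product argument adapted to the Hecke--von Neumann setting of \cite{DDJO}, using that $\mathcal{N}_\Q(W)\otimes\mathcal{N}_{\Q'}(W')$ acts on the product $L^2$-chain complex with product measure $\mu_\Q\otimes\mu_{\Q'}$. The substantive item to sketch is the Top Homology statement.

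First I would note that, since $\Sigma:=\Sigma(W,S)$ is $n$-dimensional, $\partial_{n+1}^\Q=0$, so the reduced homology collapses to $L_\Q^2 H_n(\Sigma)=\ker\partial_n^\Q$. By faithfulness of the von Neumann trace on $\mathcal{N}_\mathbf{1}(W)$, the hypothesis $L^2_\mathbf{1}b_n(\Sigma)=0$ is equivalent to $\ker\partial_n^\mathbf{1}=\{0\}$ as a closed subspace of $L^2_\mathbf{1}C_n(\Sigma)$.

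Next I would introduce the rescaling map $T\colon L_\Q^2 C_n(\Sigma)\to L_\mathbf{1}^2 C_n(\Sigma)$ defined by $T(f)(\alpha):=\mu_\Q(\alpha)f(\alpha)$. A direct calculation from the formula $\partial_n^\Q f(\sigma)=\sum_{\sigma\subset\alpha}[\sigma:\alpha]\mu_\Q(\alpha)\mu_\Q^{-1}(\sigma)f(\alpha)$ yields the intertwining identity $\partial_n^\mathbf{1}(T(f))(\sigma)=\mu_\Q(\sigma)\,\partial_n^\Q(f)(\sigma)$, so that $T$ carries $\ker\partial_n^\Q$ into $\ker\partial_n^\mathbf{1}$. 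When $\Q\leq\mathbf{1}$ one has $q_s\leq 1$ for every $s\in S$, hence $\mu_\Q(\alpha)\leq 1$ for every cell $\alpha$, which gives $\|T(f)\|_\mathbf{1}^2=\sum_\alpha \mu_\Q(\alpha)^2 f(\alpha)^2\leq \sum_\alpha \mu_\Q(\alpha)f(\alpha)^2=\|f\|_\Q^2$. Thus $T$ is a well-defined, norm-decreasing bounded operator. Because $\mu_\Q>0$ pointwise, $T$ is also injective, and combined with $\ker\partial_n^\mathbf{1}=\{0\}$ this forces $\ker\partial_n^\Q=\{0\}$, whence $L^2_\Q b_n(\Sigma)=0$.

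The point to be careful about is that $T$ is neither $W$-equivariant nor an isomorphism of Hilbert modules over a single algebra, so it cannot be used to equate von Neumann dimensions over $\mathcal{N}_\mathbf{1}(W)$ and $\mathcal{N}_\Q(W)$ directly; it is used only to compare the two kernels as ordinary closed Hilbert subspaces, and faithfulness of the trace is what converts \emph{dimension zero} back into \emph{subspace is trivial}. Everything else is routine, and the restriction $\Q\leq\mathbf{1}$ enters exactly where it is needed: to make the rescaling $T$ bounded.
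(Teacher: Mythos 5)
Your proposal is correct: the paper offers no proof of this lemma beyond the citations (the substantive top-homology item is quoted from \cite[Lemma 4.8]{wm2}), and your rescaling argument $T(f)(\alpha)=\mu_\Q(\alpha)f(\alpha)$ — intertwining $\partial_n^\Q$ with $\partial_n^{\mathbf 1}$, bounded precisely because $\mu_\Q\leq 1$ when $\Q\leq\mathbf 1$, injective, and combined with faithfulness of the trace to convert $b_n^{\mathbf 1}=0$ into $\ker\partial_n^{\mathbf 1}=0$ — is exactly the standard proof of that cited lemma. Your caveat that $T$ is only a comparison of closed subspaces, not a dimension-preserving map of Hilbert modules, is the right point to flag, and the remaining bullets are correctly handled as direct citations.
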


\subsection{An alternate definition of $L_\Q^2$--Betti numbers}\label{sec:altbetti} As discussed in \cite[$\S$6]{DO2}, L\"{u}ck defined an algebraic version of $L_\Q^2$-Betti numbers \cite{Luckbook}. The main point is that there is an equivalence of categories between the category of Hilbert $\mathcal{N}_\Q$-modules and projective $\mathcal{N}_\Q$-modules. Hence one can define $\dim_{\mathcal{N}_\Q} M$ for a finitely generated projective $\mathcal{N}_\Q$-module $M$ which agrees with the dimension of the corresponding Hilbert $\mathcal{N}_\Q$-module. So, $\dim_{\mathcal{N}_\Q} M$ for an arbitrary $\mathcal{N}_\Q$-module is then defined to be the dimension of its projective part.

As in \cite{Luckbook}, define the cohomology groups $H_W^\ast(\mathcal{U}(W,X),\mathcal{N}_\Q(W))$ to be the cohomology of the complex $$C_W^\ast(\mathcal{U},\mathcal{N}_\Q(W)):=\text{Hom}_W(C_\ast(\mathcal{U}),\mathcal{N}_\Q(W)).$$

It follows that $$L_\Q^2b_i(\mathcal{U})=\dim_{\mathcal{N}_\Q}H_W^i(\mathcal{U},\mathcal{N}_\Q(W)).$$ The advantage of this definition is that we do not need to take closures of images (which allows us e.g. to use spectral sequences).

\subsection{Conjectures in $L^2_\Q$-cohomology.}


The following conjecture strengthens a classical conjecture of Singer, which predicts that the $L^2$-cohomology of a closed, aspherical $n$-manifold is concentrated in the middle dimension.

\begin{wsinger}[Weighted Singer Conjecture]
\label{conj:weightedSinger}
Suppose that $L$ is a triangulation of $S^{n-1}$, so that $\Sigma(W,S)$ is a contractible $n$-manifold. Then $$L_\mathbf{q}^2H_i(\Sigma(W,S))=0 \text{ for } i>\frac{n}{2} \text{ and } \mathbf{q}\leq \mathbf{1}.$$
\end{wsinger}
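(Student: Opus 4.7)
Since this assertion is posed as a conjecture, I describe the strategy that the paper's subsequent theorems implement in special cases; it is the natural line of attack given the machinery set up so far.

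The plan is to induct on $n = \dim \Sigma(W,S)$. Because $L = S^{n-1}$ makes $\Sigma$ a manifold without boundary, Poincaré duality specialises to $L^2_\Q H_i(\Sigma) \cong L^2_{\Q^{-1}} H_{n-i}(\Sigma)$. Thus the vanishing in degrees $i > n/2$ for $\Q \leq \mathbf{1}$ is equivalent to a vanishing in degrees $i < n/2$ for $\Q^{-1} \geq \mathbf{1}$. Combined with the top-dimensional vanishing of Lemma \ref{lemma:propertiesbetti} and with the fact that $L^2_\Q H_*$ is concentrated in degree $0$ on the region of convergence $\mathcal{R}$, this localises the problem to a narrow band around $n/2$ for $\Q \leq \mathbf{1}$ lying outside $\mathcal{R}$.

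The first active step is to apply Theorem \ref{thm:l2kskeleton}: for each range of $\Q$ controlled by a subgroup of $W$, it concentrates $L^2_\Q H_*(\Sigma)$ in a low-dimensional slice. Dualising then gives a corresponding high-dimensional concentration for reciprocal parameters, and squeezing from both sides pushes the conjecture to a vanishing statement near the middle degree. The inductive hypothesis is then fed in via the Coxeter-cell decomposition of $\Sigma_{cc}(W,S)$: the link of a Coxeter cell of type $T$ is controlled by the Davis complex of a special subgroup whose nerve is a sphere of strictly smaller dimension, so by induction its weighted $L^2$-cohomology is middle-concentrated. A spectral sequence filtered by the Coxeter cells then reduces the problem on $\Sigma$ in the range $i > n/2$ to a calculation at a single middle-dimensional stratum.

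The main obstacle is the exact middle degree $i = n/2$ when $n$ is even. Here Poincaré duality offers no reduction --- the degree is self-dual --- and the spectral sequence tends to concentrate all residual cohomology there. My plan for this step is to combine the weighted Atiyah formula $\chi_\Q(\Sigma) = 1/W(\Q)$ with integrality information supplied by the Strong Atiyah Conjecture applied to the relevant subgroups, thereby forcing $\dim_{\mathcal{N}_\Q} L^2_\Q H_{n/2}(\Sigma)$ to take a value that the alternating sum of the already-computed lower Betti numbers pins down uniquely. This is exactly the argument that succeeds in Theorems \ref{thm:AtiyahKn} and \ref{thm:Atiyahcomputation} for ruins and in the extension of Davis--Okun to $n = 6, 8$; executing it uniformly for every even $n$ and every $\Q \leq \mathbf{1}$ is the true content of the conjecture and remains out of reach.
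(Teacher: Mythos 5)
The statement you were asked to address is Conjecture \ref{conj:weightedSinger} itself; the paper does not prove it, and you correctly recognize this, offering a strategy rather than a proof and conceding at the end that the general case remains open. That is the right conclusion, and your opening reductions (Poincar\'e duality exchanging $i>n/2$ at $\Q\le\mathbf{1}$ with $i<n/2$ at $\Q^{-1}\ge\mathbf{1}$, concentration in degree $0$ on $\mathcal{R}$, top-degree vanishing) match what the paper records.

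However, your description of how the paper's partial results actually work is inaccurate in two places, and one step of your sketch would fail as stated. First, Theorems \ref{thm:AtiyahKn} and \ref{thm:Atiyahcomputation} do not combine the weighted Atiyah formula $\chi_\Q(\Sigma)=1/W(\Q)$ with Strong Atiyah integrality at the middle degree of $\Sigma$; instead they show that each $(S,t)$-ruin has $L^2$-cohomology concentrated in dimension $1$ by applying the integrality bound of Lemma \ref{lcm12} to the \emph{top chain group of the ruin} (whose von Neumann dimension is computed to equal $1/\lcm$ of an enlarged group $W_{\St(t)}\rtimes G_t$, Lemma \ref{vanishing}), and then invoke Corollary \ref{cor:ruins2dim}. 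No Euler-characteristic bookkeeping enters. Second, the $n=6,8$ extension (Theorem \ref{WeightedManifold}) uses no Atiyah input at all: it deletes vertices of $bL$ whose links are joins of low-dimensional spheres, using the weighted K\"unneth formula and the known cases $n\le 4$ to reduce to a complex of dimension at most $n/2$. Third, your proposed ``squeeze'' does not close up for a fixed $\Q<\mathbf{1}$: the low-dimensional concentration of Theorem \ref{thm:l2kskeleton} holds for $\Q$ in regions of convergence of star subgroups, while its dual gives high-dimensional concentration only for the \emph{reciprocal} parameter, so for a fixed $\Q$ lying outside both $\mathcal{R}$ and $\mathcal{R}^{-1}$ neither statement applies and nothing is pinned to the middle degree. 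This gap --- not merely the even middle dimension --- is why the conjecture is open, and it is worth stating that your outline is a heuristic consistent with the paper's program rather than a reduction of the conjecture to a single remaining step.
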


By weighted Poincar\'{e} duality, this is equivalent to the conjecture that if $\mathbf{q}\geq \mathbf{1}$ and $i<\frac{n}{2}$, then $L_\mathbf{q}^2H_i(\Sigma(W,S))$ vanishes. Conjecture \ref{conj:weightedSinger} is true by \cite[Theorem 2.1]{Dymara} for $n\leq 2$, and in \cite{DDJO}, it was proved for the case where $W$ is right-angled and $n\leq 4$, and furthermore it was shown that Conjecture \ref{conj:weightedSinger} for $n$ odd implies Conjecture \ref{conj:weightedSinger} for $n$ even (also under the assumption that $W$ is right-angled). Recently, the first author proved the conjecture under the assumption that $L$ is a flag triangulation and $n\leq 4$ \cite{wm}.

If $\mathbf{q}=\mathbf{1}$, these are the ordinary $L^2$-cohomology groups, and one recovers the original Singer Conjecture. The Singer Conjecture holds by a result of Lott and L\"{u}ck \cite{LottLuck}, in conjunction with the validity of the Geometrization Conjecture for $3$-manifolds \cite{p02,p03}, for $n=3$. It was proved by Davis-Okun \cite{do} for the case where $W$ is right-angled and $n\leq 4$. It was later proved for the case where $W$ is an even Coxeter group and $n=4$ by Schroeder \cite{Schroeder} under the restriction that $L$ is a flag complex. Recently Okun--Schreve \cite[Theorem 4.9]{os14} gave a proof of the Singer Conjecture for the case $n=4$, so now the Singer Conjecture is known in full generality for Coxeter groups in dimensions $n\leq 4$.

Recall that a subcomplex $A$ of $L$ is \emph{full} if the vertices of a simplex of $L$ lie in $A$, then the entire simplex lies in $A$. In \cite{DDJO}, the following key strengthening of the Weighted Singer Conjecture was proved in low dimensions:

\begin{theorem}[{\cite[Theorem 16.13]{DDJO}}]\label{RAGeneralizedSinger}
Suppose that $W_L$ is right-angled, $L$ is a triangulation of $S^{n-1}$ with $n\leq 4$ and that $A$ is any full subcomplex. Furthermore, suppose that $\mathbf{q}\leq\mathbf{1}$. Then $L^2_\Q b_i(\Sigma(W_A))=0$ for $i>\frac{n}{2}$.
\end{theorem}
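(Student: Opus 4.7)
The plan is a double induction: outer on the dimension $n$, and, for fixed $n$, inner on the number of vertices in $L\setminus A$. At each inner step I would fix $v\in L^{(0)}\setminus A^{(0)}$, enlarge $A$ to the full subcomplex
\begin{equation*}
A' := A\cup \overline{\St}(v,L), \qquad B := A\cap \Lk(v,L),
\end{equation*}
and read off vanishing for $A$ from a Mayer-Vietoris sequence relating $\Sigma(W_{A'})$, $\Sigma(W_A)$, $\Sigma(W_{\overline{\St}(v)})$ and $\Sigma(W_B)$.

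For the outer base cases $n\leq 2$, the weighted Singer Conjecture holds unconditionally by \cite[Theorem 2.1]{Dymara}, and proper full subcomplexes in those dimensions are handled by the same inner induction (the links are $0$-spheres or empty). For the outer inductive step, assume the theorem in dimensions below $n\in\{3,4\}$, and perform inner induction on $|L\setminus A|$. The inner base case $A=L$ is exactly the right-angled weighted Singer Conjecture for the $(n-1)$-sphere $L$, which is proved in \cite{DDJO} for $n\leq 4$.

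For the inner inductive step, $A'$ is a full subcomplex with $|L\setminus A'|<|L\setminus A|$, and $B$ is a full subcomplex of $\Lk(v,L)\cong S^{n-2}$. Since $\overline{\St}(v,L)=v*\Lk(v,L)$ gives $W_{\overline{\St}(v)}=\langle v\rangle\times W_{\Lk(v)}$, we have $\Sigma(W_{\overline{\St}(v)})=\Sigma(\mathbb{Z}/2)\times \Sigma(W_{\Lk(v)})$, and the weighted Kunneth formula together with the weighted Singer Conjecture applied to $\Lk(v)$ yields
\begin{equation*}
L^2_\Q b_i\bigl(\Sigma(W_{\overline{\St}(v)})\bigr)=\tfrac{1}{1+q_v}\,L^2_\Q b_i\bigl(\Sigma(W_{\Lk(v)})\bigr)=0 \qquad\text{for } i>\tfrac{n-1}{2}.
\end{equation*}
Combined with $L^2_\Q b_i(\Sigma(W_{A'}))=0$ for $i>n/2$ (inner hypothesis) and $L^2_\Q b_i(\Sigma(W_B))=0$ for $i>(n-1)/2$ (outer hypothesis applied to $B\subset S^{n-2}$), the Mayer-Vietoris sequence
\begin{equation*}
\cdots\to L^2_\Q H^{i-1}(\Sigma(W_B))\to L^2_\Q H^i(\Sigma(W_{A'}))\to L^2_\Q H^i(\Sigma(W_A))\oplus L^2_\Q H^i(\Sigma(W_{\overline{\St}(v)}))\to L^2_\Q H^i(\Sigma(W_B))\to \cdots
\end{equation*}
forces $L^2_\Q H^i(\Sigma(W_A))=0$ for $i>n/2$.

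The main technical point is producing the Mayer-Vietoris sequence above in weighted $L^2$-cohomology. The decomposition $K_{A'}=K_A\cup K_{\overline{\St}(v)}$ lifts to a $W_{A'}$-invariant subcomplex cover $\Sigma(W_{A'})=W_{A'}\cdot K_A\,\cup\, W_{A'}\cdot K_{\overline{\St}(v)}$ with intersection $W_{A'}\cdot K_B$, and each piece is a $W_{A'}$-space of the form $W_{A'}\times_{W_C}\Sigma(W_C)$ for $C\in\{A,\overline{\St}(v),B\}$. Passing to the algebraic $\mathcal{N}_\Q$-module definition of Section \ref{sec:altbetti}, one applies $\text{Hom}_{W_{A'}}(-,\mathcal{N}_\Q(W_{A'}))$ to the resulting short exact sequence of $W_{A'}$-equivariant cellular chain complexes and invokes the induction identity $\dim_{\mathcal{N}_\Q(W_{A'})} H^*(W_{A'}\times_{W_C}X,\mathcal{N}_\Q(W_{A'}))=\dim_{\mathcal{N}_\Q(W_C)} H^*(X,\mathcal{N}_\Q(W_C))$ to identify each term with the expected $L^2_\Q$-Betti number. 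Crucially, the cover is built from subcomplexes of $\Sigma(W_{A'})$, not from an amalgamated product decomposition of $W_{A'}$, so extra commutation relations in $W_{A'}$ arising from edges of $L$ between $A\setminus \Lk(v)$ and $\Lk(v)\setminus A$ do not obstruct the argument.
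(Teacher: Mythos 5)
Your overall strategy (double induction on dimension and on the number of missing vertices, with a Mayer--Vietoris sequence over a vertex star) is the right one and is essentially the Davis--Okun mechanism behind \cite[Theorem 16.13]{DDJO}, but there is a genuine gap at the key step: the subcomplex $A':=A\cup\overline{\St}(v,L)$ is in general \emph{not} a full subcomplex of $L$. For example, take $L$ to be the boundary of the octahedron (a flag triangulation of $S^2$), $A=\{-v\}$ the antipode of $v$: then $A'$ contains all six vertices but omits every edge from $-v$, so it is not full. Because of this, you cannot apply the inner induction hypothesis to $A'$: the statement being proved concerns full subcomplexes only. If instead you replace $A'$ by the full subcomplex spanned by its vertices, then $\{W_{A'}\Sigma_A,\,W_{A'}\Sigma_{\overline{\St}(v)}\}$ is no longer a cover of $\Sigma(W_{A'})$ --- the Coxeter cells of type $\{a,w\}$ with $a\in A\setminus\Lk(v)$ and $w\in\Lk(v)\setminus A$ adjacent in $L$ lie in neither piece --- so your Mayer--Vietoris sequence computes the cohomology of a proper subcomplex, not of $\Sigma(W_{A'})$. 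Your closing remark that such edges ``do not obstruct the argument'' is precisely where the proof breaks: they obstruct either the fullness of $A'$ or the covering property, and you need one of the two.

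The fix is to run the induction by \emph{removing} one vertex at a time rather than adjoining a whole star. Given $v\in L^{(0)}\setminus A^{(0)}$, let $A'$ be the full subcomplex of $L$ spanned by $A^{(0)}\cup\{v\}$; then $A'$ genuinely decomposes as $A\cup\overline{\St}(v,A')$ with intersection $\Lk(v,A')=A'\cap\Lk(v,L)$, and all four complexes are full in $L$ (fullness of $\overline{\St}(v,A')$ uses that $L$ is flag, which holds since $W_L$ is right-angled). Now $\Lk(v,A')$ is a full subcomplex of $\Lk(v,L)\cong S^{n-2}$, so the outer induction applies to it; $\overline{\St}(v,A')$ is the cone $v\ast\Lk(v,A')$ and is handled by your K\"unneth computation; and $A'$ has strictly fewer missing vertices, so the inner hypothesis applies. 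Your Mayer--Vietoris sequence, your treatment of the cone factor $\tfrac{1}{1+q_v}$, the induction principle for $\mathcal N_\Q$-dimensions, and the base cases are all fine as written; only the choice of the intermediate complex needs this correction.
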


\section{Ruins}\label{sec:ruins}
\subsection{Some Hilbert $\mathcal{N}_\Q(W)$-submodules of $L^2_\Q(W)$}\label{sec:hts}
We begin by considering the following self-adjoint idempotents in $\mathcal{N}_\Q(W)$:

\begin{lemma}[{\cite[Lemma 19.2.6]{Davis}}]
Given a subset $T\subset S$ and and $\Q\in\mathcal{R}^{-1}_T$, there is an idempotent $h_T\in\mathcal{N}_\Q(W)$ defined by
$$h_T:=\frac{1}{W_T(\Q^{-1})}\sum_{w\in W_T} \varepsilon_w q_w^{-1} e_w,$$
where $\varepsilon_w=(-1)^{l(w)}.$
\end{lemma}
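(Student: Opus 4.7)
The statement comprises three verifications: that the formal sum defines an element of $\mathcal{N}_\Q(W)$ (a bounded right-multiplication operator), that $h_T$ is self-adjoint, and that $h_T^2 = h_T$. When $W_T$ is finite there is nothing to check about convergence, so the substance lies in the case $W_T$ infinite, where the hypothesis $\Q \in \mathcal{R}_T^{-1}$ (equivalently $W_T(\Q^{-1}) < \infty$) guarantees that $\pi_T := W_T(\Q^{-1}) h_T = \sum_w \varepsilon_w q_w^{-1} e_w$ lies in $L^2_\Q(W)$ (with norm squared equal to $W_T(\Q^{-1})$).

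The first step would be to establish the algebraic identity of $L^2_\Q(W)$-vectors
$$e_s \pi_T = -\pi_T \qquad \text{for every } s \in T.$$
I would prove this by pairing each $w \in W_T$ with $sw$; a direct calculation using the Hecke multiplication rule ($e_s e_w = e_{sw}$ if $l(sw) > l(w)$, and $e_s e_w = (q_s-1)e_w + q_s e_{sw}$ otherwise) shows that the contribution of the pair $\{w, sw\}$ to $e_s \pi_T$ is exactly the negative of its contribution to $\pi_T$. Iterating along a reduced expression then yields $e_w \pi_T = \varepsilon_w \pi_T$ for every $w \in W_T$.

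To upgrade $\pi_T$ from an $L^2$-vector to a bounded right-multiplication operator, I would decompose $L^2_\Q(W) = \bigoplus_u M_u$ orthogonally, where $u$ ranges over $(\emptyset,T)$-reduced elements and $M_u = \overline{\mathrm{span}}\{e_{uw} : w \in W_T\}$. For such $u$, one has $l(uw) = l(u) + l(w)$, so $e_{uw} = e_u e_w$, and the identity above gives $e_{uw} \pi_T = \varepsilon_w (e_u \pi_T)$. Combined with the computation
$$\|e_u \pi_T\|_\Q^2 = \sum_{w \in W_T} q_w^{-2}\, q_{uw} = q_u\, W_T(\Q^{-1}),$$
this identifies $R_{\pi_T}|_{M_u}$ as a rank-one operator whose norm, via Cauchy--Schwarz, is bounded by $W_T(\Q^{-1})$ uniformly in $u$. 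The length-truncated partial sums of $\pi_T$ lie in $\mathbb{R}_\Q(W) \subset \mathcal{N}_\Q(W)$ and converge strongly to it, so $\pi_T$ itself defines an element of $\mathcal{N}_\Q(W)$. Self-adjointness is then immediate from invariance of $h_T$ under the Hecke anti-involution $e_w \mapsto e_{w^{-1}}$ (since $l$, $q_{(\cdot)}$, and $\varepsilon$ are all invariant under $w \mapsto w^{-1}$), and idempotence from
$$\pi_T^2 = \sum_{w \in W_T} \varepsilon_w q_w^{-1}(e_w \pi_T) = W_T(\Q^{-1})\, \pi_T,$$
which on dividing by $W_T(\Q^{-1})^2$ yields $h_T^2 = h_T$. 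The main obstacle is the convergence step: the coset decomposition is what makes the rank-one estimate uniform in $u$, and finiteness of $W_T(\Q^{-1})$---exactly what the hypothesis $\Q \in \mathcal{R}_T^{-1}$ provides---is what closes the argument.
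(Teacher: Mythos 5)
Your argument is correct, and it is essentially the proof of the cited result (\cite[Lemma 19.2.6]{Davis}, going back to Dymara), which this paper quotes without reproducing: the key identity $e_s\pi_T=-\pi_T$ for $s\in T$, hence $e_w\pi_T=\varepsilon_w\pi_T$, gives idempotence and the block--rank-one structure over the $(\emptyset,T)$-coset decomposition, and finiteness of $W_T(\Q^{-1})$ gives the uniform bound. The only cosmetic point is the final passage to $\mathcal{N}_\Q(W)$: rather than invoking strong convergence of the truncations (whose operator norms you have not bounded uniformly), it is cleaner to extend $R_{\pi_T}$ continuously from $\mathbb{R}W$ using your uniform bound and note that the extension still commutes with the left $\mathbb{R}_\Q(W)$-action by density.
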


Therefore, the map defined by $x\rightarrow h_T x$ is an orthogonal projection (whenever $h_T$ is defined), and we denote the image by $H_T$. Note that by \cite[Lemma 19.2.13]{Davis}, $$H_T=\bigcap_{s\in T} H_s.$$

Using these submodules, we define a chain complex as follows. For a spherical subset of cardinality $k$, $T\in\mathcal{S}^{(k)}$, let $$C_i(H_T):=\bigoplus_{U\in (\mathcal{S}_{\geq T})^{(i+k)}} H _U.$$

Fix some ordering of $\{s\in S-T\mid T\cup\{s\}\in\mathcal{S}\}$. Whenever $U\subset V$, we have an inclusion $i_V^U: H_V\hookrightarrow H_U$. Therefore,  the boundary map $\partial : C_{i+1}(H_T)\rightarrow C_i(H_T)$ corresponds to a matrix $(\partial_{UV})$, where $\partial_{UV}=0$ unless $U\subset V$, and is equal to $(-1)^j i_V^U$ if $U$ is obtained by deleting the $j^{\text{th}}$ element of $V$. This turns $C_\ast(H_T)$ into a chain complex of Hilbert $\mathcal{N}_\Q(W)$-modules. Similarly, whenever $U\subset V$ we have the projection $p_V^U: H_U\rightarrow H_V$. Thus we have a coboundary map where the matrix entries consist of projections, and we get a cochain complex $C^\ast(H_T)$ of Hilbert $\mathcal{N}_\Q(W)$-modules.

\subsection{Ruins} For $U\subset S$, set $\mathcal{S}(U):=\{T\in\mathcal{S}\mid T\subset U\}$. Define $\Sigma(U)$ to be the subcomplex of $\Sigma_{cc}(W,S)$ consisting of all (closed) Coxeter cells of type $T$ with $T\in\mathcal{S}(U)$. Given $T\in\mathcal{S}(U)$, we define the following subcomplexes of $\Sigma(U)$:

\begin{align*}
\Omega(U,T): & \hskip2mm \text{the union of closed cells of type }T', \text{ with }T'\in\mathcal{S}(U)_{\geq T},\\
\partial\Omega(U,T): & \hskip2mm \text{the cells of }\Omega(U,T)\text{ of type } T'', \text{ with } T''\not\in\mathcal{S}(U)_{\geq T}.
\end{align*}

The pair $(\Omega(U,T),\partial\Omega(U,T))$ is the \emph{$(U,T)$-ruin}. Note that if $T=\emptyset$, then $\Omega(U,T)=\Sigma(U)$ and $\partial\Omega(U,T)=\emptyset$. Ruins can also be expressed in terms of the basic construction. Define $K(U,T):=\Omega(U,T)\cap K$ and $\partial K(U,T):=\partial\Omega(U,T)\cap K$, where $K$ is the Davis chamber. Then $K(U,T)$ and $\partial K(U,T)$ have an induced mirror structure, and it follows that

$$\Omega(U,T)=\mathcal{U}(W,K(U,T)),\hskip2mm \partial\Omega(U,T)=\mathcal{U}(W,\partial K(U,T)).$$

The $(S,T)$-ruin has a chain complex that looks like this:

\begin{prop}[{\cite[Lemma 20.6.21]{Davis}}]
\label{prop:ruinchaincpx}
For $T\in\mathcal{S}^{(k)}$, the chain complexes $C_\ast(H_T)$ and $L_\Q^2C_{\ast+k}(\Omega(S,T),\partial\Omega(S,T))$ of $\mathcal{N}_\Q(W)$-modules are isomorphic. In particular, $$L_\Q^2C_{m}(\Omega(S,T),\partial\Omega(S,T))=0 \text{ for } m<k.$$
\end{prop}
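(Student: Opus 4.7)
The plan is to establish the isomorphism one degree at a time, by identifying the $L^2_\Q$-chains supported on each $W$-orbit of Coxeter cells with the corresponding submodule $H_{T'}$, and then checking that the boundary maps on the two sides agree. The Coxeter cells of $\Omega(S,T)$ that do \emph{not} lie in $\partial\Omega(S,T)$ are precisely those whose type $T'$ satisfies $T'\supseteq T$; in particular, since a Coxeter cell of type $T'$ has dimension $|T'|$, every cell of $\Omega(S,T)$ of dimension less than $k$ has type of size less than $k$ and therefore fails to contain $T$, so it must lie in $\partial\Omega(S,T)$. This observation immediately yields the vanishing $L^2_\Q C_m(\Omega(S,T),\partial\Omega(S,T))=0$ for $m<k$, and allows us to split
\[
L^2_\Q C_{i+k}(\Omega(S,T),\partial\Omega(S,T)) \;=\; \bigoplus_{T'\in(\mathcal{S}_{\geq T})^{(i+k)}} \mathcal{C}(T'),
\]
where $\mathcal{C}(T')$ denotes the $L^2_\Q$-chains supported on the $W$-orbit of $c_{T'}$.

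The main construction is an $\mathcal{N}_\Q(W)$-equivariant isometry $H_{T'}\xrightarrow{\sim}\mathcal{C}(T')$ for each spherical subset $T'\supseteq T$. Fix an orientation $[c_{T'}]$ on the Coxeter cell $c_{T'}$. Since $W_{T'}$ is the stabilizer of $c_{T'}$ and acts on it by reflections, for each $v\in W_{T'}$ one has $v\cdot[c_{T'}]=\varepsilon_v[c_{T'}]$. Define a map $\mathbb{R}_\Q W \to \mathcal{C}(T')$ by $e_w\mapsto w\cdot[c_{T'}]$; the relations $e_{wv}=\varepsilon_v e_w$ for $v\in W_{T'}$ must hold in the image, and using the explicit formula $h_{T'}=\frac{1}{W_{T'}(\Q^{-1})}\sum_{v\in W_{T'}}\varepsilon_v q_v^{-1} e_v$, I would verify that this map factors through left multiplication by $h_{T'}$ and so descends to an $\mathcal{N}_\Q(W)$-linear map $H_{T'}\to\mathcal{C}(T')$. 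The measure $\mu_\Q(wc_{T'})=q_u$ for the $(\emptyset,T')$-reduced representative $u$ of $wW_{T'}$ is tailored so that, after normalizing by $W_{T'}(\Q^{-1})$, this map is an isometry onto $\mathcal{C}(T')$.

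Next I would check that the cellular boundary corresponds to the boundary on $C_\ast(H_T)$. If $T'\subsetneq T''$ with $|T''|=|T'|+1$ and $T''\in\mathcal{S}_{\geq T}$, then $\partial c_{T''}$ contains exactly the faces $wc_{T'}$ as $w$ ranges over coset representatives of $W_{T''}/W_{T'}$. Summing these with the appropriate orientations yields, under the identification of the previous paragraph, precisely the inclusion $H_{T''}\hookrightarrow H_{T'}$ (up to the sign $(-1)^j$ determined by the fixed ordering of $S\setminus T$ and the position of the deleted generator). The diagonal terms in the boundary matrix vanish because all other faces of $c_{T''}$ have type not contained in $\mathcal{S}_{\geq T}$ in the appropriate sense, and thus are projected away in the relative complex.

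The main obstacle I expect is the careful bookkeeping in the second paragraph: verifying that the explicit formula $e_w\mapsto w\cdot[c_{T'}]$ is in fact an isometry, and that the signs introduced by the reflection action of $W_{T'}$ on $c_{T'}$ combine with the sign coefficients $\varepsilon_v q_v^{-1}$ appearing in $h_{T'}$ to produce precisely the $(W_{T'}(\Q^{-1}))^{-1}$ normalization required for an $\mathcal{N}_\Q(W)$-equivariant isometry. Once that is settled, the compatibility of the boundaries is an essentially combinatorial check about how $W_{T''}/W_{T'}$-cosets of reduced words correspond to boundary faces, which propagates through the rest of the argument without difficulty.
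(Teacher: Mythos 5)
Your outline is correct and follows the same route as the source the paper cites for this statement ([Davis, Lemma 20.6.21] -- the paper itself gives no proof): decompose the relative chains by cell type, note that the types surviving in the relative complex are exactly the $T'\supseteq T$ (whence the vanishing below degree $k$), identify the orbit module of $c_{T'}$ with $H_{T'}$ via $e_w\mapsto w\cdot[c_{T'}]$, and match the boundary with the inclusions $H_{T''}\hookrightarrow H_{T'}$. The only slips are cosmetic: the isometric normalization is by $\sqrt{W_{T'}(\Q^{-1})}$ rather than $W_{T'}(\Q^{-1})$, and the boundary one must use is the perturbed $\partial^{\Q}$, whose extra factor $\mu_\Q(\alpha)\mu_\Q^{-1}(\sigma)=q_w^{-1}$ supplies precisely the coefficients $\varepsilon_w q_w^{-1}$ of $h_{T''}$ needed to realize the inclusion $i_{T''}^{T'}$.
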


For brevity, we write $(\Omega(U,T),\partial)$. For $s\in T$, set $U'=U-s$ and $T'=T-s$. As in \cite[Proof of Theorem 8.3]{DDJO}, we have the following weak exact sequence:
\begin{center}
\begin{tikzpicture}[baseline= (a).base]\label{eq:ruinsequence}\node[scale=1] (a) at (0,0){
\begin{tikzcd}[column sep=small]
 L_\Q^2H_{\ast}(\Omega(U',T'),\partial) \arrow{r} & L_\Q^2H_{\ast}(\Omega(U,T'),\partial) \arrow{r} & L_\Q^2H_{\ast}(\Omega(U,T),\partial)
\end{tikzcd}
};
\end{tikzpicture}
\end{center}
For the special case when $U=S$ and $T=\{s\}$ the above sequence becomes:


\begin{center}
\begin{tikzpicture}[baseline= (a).base]\node[scale=1] (a) at (0,0){
\begin{tikzcd}[column sep=small]\label{eq:ruinsequence}
  L_\Q^2H_{\ast}(\Sigma(S-s)) \arrow{r} & L_\Q^2H_{\ast}(\Sigma(S))\arrow{r} & L_\Q^2H_{\ast}(\Omega(S,s),\partial)
\end{tikzcd}
};
\end{tikzpicture}
\end{center}

\subsection{$L^2_\Q$-(co)homology of ruins}
Given a Coxeter system $(W,S)$, for $T\in\mathcal{S}$ and $T\subseteq V\subseteq S$, define $$\St(T,V):=\bigcup_{\substack{U\subseteq V \\ U\cup T\in\mathcal{S}}} U,$$ and $$Lk(T,V):=\St(T,V)\setminus T.$$ If $V=S$, then we write $\St(T)$ and $Lk(T)$ instead of $\St(T,S)$ and $Lk(T,S)$. If $T=\emptyset$, we make the convention that $S(T,U)=U$.

\begin{theorem}[Compare {\cite[Theorem 20.6.22]{Davis}}]
\label{thm:homologykruin}
Suppose that $T\in\mathcal{S}^{(k)}$ and that $\Q\in \mathcal{R}_{\St(T)}$. Then $L^2_\Q H_\ast(\Omega(S,T),\partial\Omega(S,T))$ is concentrated in dimension $k$.

\end{theorem}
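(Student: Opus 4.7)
The plan is to combine the identification of Proposition \ref{prop:ruinchaincpx} with a locality argument and an induction on $k = |T|$. By that proposition, the shifted relative chain complex $L^2_\Q C_{\ast+k}(\Omega(S,T), \partial\Omega(S,T))$ is isomorphic to $C_\ast(H_T)$, so it suffices to prove that the homology of $C_\ast(H_T)$ is concentrated in degree $0$.

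First I would carry out a locality reduction. Every $U \in \mathcal{S}_{\geq T}$ is contained in $\St(T)$, so each idempotent $h_U$ appearing in the boundary maps of $C_\ast(H_T)$ lies in the subalgebra $\mathcal{N}_\Q(W_{\St(T)}) \subseteq \mathcal{N}_\Q(W)$. Decomposing $L^2_\Q(W)$ along cosets of $W_{\St(T)}$ makes $C_\ast(H_T)$ split as a direct sum of copies of the analogous complex built from the subsystem $(W_{\St(T)}, \St(T))$. Since vanishing of homology is preserved under such direct sums, I reduce to the case $S = \St(T)$, in which the hypothesis becomes simply $\Q \in \mathcal{R}$.

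Next I induct on $k$. The base case $k = 0$ is Dymara's theorem (third bullet of Lemma \ref{lemma:propertiesbetti}): $L^2_\Q H_\ast(\Sigma(W,S))$ is concentrated in dimension $0$ whenever $\Q \in \mathcal{R}$, and $\Omega(S, \emptyset) = \Sigma(W,S)$. For the inductive step, fix $s \in T$, set $T' = T - s$, and apply the ruin long exact sequence
\[
\cdots \to L^2_\Q H_n(\Omega(S-s, T'), \partial) \xrightarrow{\alpha_n} L^2_\Q H_n(\Omega(S, T'), \partial) \to L^2_\Q H_n(\Omega(S, T), \partial) \to L^2_\Q H_{n-1}(\Omega(S-s, T'), \partial) \to \cdots
\]
of the paper. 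In the reduced setting $S = \St(T)$ one checks that $\St(T', S) = S$ and $\St(T', S-s) = S - s$; since $\Q \in \mathcal{R}_S \subseteq \mathcal{R}_{S-s}$, the inductive hypothesis applied directly to $(\Omega(S, T'), \partial)$, and (after a further locality reduction to the subsystem $(W_{S-s}, S-s)$) to $(\Omega(S-s, T'), \partial)$, shows that both outer terms are concentrated in dimension $k-1$. The long exact sequence then forces $L^2_\Q H_n(\Omega(S, T), \partial) = 0$ for $n \notin \{k-1, k\}$; the remaining task is to show $\mathrm{coker}\,\alpha_{k-1} = 0$.

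The hard part will be the surjectivity of $\alpha_{k-1}$. By Proposition \ref{prop:ruinchaincpx}, the source and target of $\alpha_{k-1}$ are explicit quotients of the same module $H_{T'}$: the target equals $H_{T'} / \overline{\sum_{t \in \Lk(T', S)} H_{T' \cup \{t\}}}$ while the source equals $H_{T'} / \overline{\sum_{t \in \Lk(T', S-s)} H_{T' \cup \{t\}}}$. Because $T' \cup \{s\} = T \in \mathcal{S}$ we have $s \in \Lk(T', S)$, and hence $\Lk(T', S-s) = \Lk(T', S) \setminus \{s\}$; consequently the source's denominator is contained in the target's, and the chain-level inclusion induces $\alpha_{k-1}$ as the natural quotient map, which is therefore surjective. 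This closes the induction. The most delicate points to make rigorous will be the coset splitting at the level of chain complexes (Step 1) and the identification of the connecting map in the long exact sequence with the projection just described.
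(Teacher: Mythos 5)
Your proposal is correct and follows the same skeleton as the paper's proof (induction on $k$, Dymara's theorem as the base case, the ruin exact sequence for the inductive step), but it handles two steps differently, and one of them is more work than necessary. First, the paper never passes to the subgroup $W_{\St(T)}$: instead of your coset-splitting/induction-principle reduction, it simply observes that $\Omega(V,T)=\Omega(\St(T,V),T)$ for every $V\subseteq S$ and runs the induction over \emph{all} pairs $(U,T)$ with $T\in\mathcal{S}(U)^{(k)}$ and $\Q\in\mathcal{R}_{\St(T,U)}$, keeping every object an $\mathcal{N}_\Q(W)$-module throughout; this sidesteps the need to justify that homology vanishing is preserved under the coset decomposition (which is true, but requires the induction principle for Hecke--von Neumann algebras), and it also makes the term $\Omega(S-s,T')$ directly covered by the inductive hypothesis rather than requiring a second locality reduction. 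Second, and more importantly, your "hard part" is not needed: Proposition \ref{prop:ruinchaincpx} already states that $L^2_\Q C_m(\Omega(S,T),\partial\Omega(S,T))=0$ for $m<k$, so $L^2_\Q H_{k-1}(\Omega(S,T),\partial)$ vanishes at the chain level, and the exact sequence only has to rule out degrees $>k$ --- which the concentration of the two outer terms in degree $k-1$ does immediately. That said, your surjectivity argument for $\alpha_{k-1}$ is itself correct: the identification of both sides as quotients of $H_{T'}$ by the closed spans of the $H_{T'\cup\{t\}}$, with $\Lk(T',S-s)=\Lk(T',S)\setminus\{s\}$, does exhibit $\alpha_{k-1}$ as a further quotient map, so your route closes the induction; it just replaces a one-line observation with the most delicate step of your write-up.
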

\begin{proof}
 We first make an observation about ruins. We note that for every $V\subseteq S$, $\Omega(V,T)=\Omega(\St(T,V),T)$, the point being that $\Omega(V,T)$ consists of Coxeter cells corresponding to spherical subsets of $V$ containing $T$, and $\St(T,V)$ is the union of all such subsets. In particular, $\Omega(U,T)=\Omega(\St(T,U),T))$, and hence $\Omega(U,T)$ is a subcomplex of $\Sigma(\St(T,U))$.

The proof is by induction on $k$. We will show that for $U\subset S$ and $T\in\mathcal{S}(U)^{(k)}$, $L^2_\Q H_\ast(\Omega(U,T),\partial)$ is concentrated in dimension $k$. For the base case $k=0$, note that $\St(\emptyset,U)=U$, $\Omega(U,\emptyset)=\Sigma(U)$, and $\partial\Omega(U,\emptyset)=\emptyset$. Hence, for $k=0$, the theorem asserts that for $\Q\in\mathcal{R}_U$, $L^2_\Q H_\ast(\Sigma(U))$ is concentrated in dimension $0$, which is Lemma \ref{prop:betti0}.

Suppose the theorem is true for $k-1$ and let $T\in\mathcal{S}(U)^{(k)}$. Let $s\in T$, $V=T-s$ and consider the long exact sequence:


$$\longrightarrow L_\Q^2H_\ast(\Omega(\St(T,U)-s,V),\partial)\longrightarrow L_\Q^2H_\ast(\Omega(\St(T,U),V),\partial) \longrightarrow\dotsm$$
$$
\dotsm\longrightarrow L_\Q^2H_\ast(\Omega(\St(T,U),T),\partial)\longrightarrow
$$

Note that $$\Omega(\St(T,U),V)=\Omega(\St(V,\St(T,U)),V)$$ and $$\Omega(\St(T,U)-s,V)=\Omega(\St(V,\St(T,U)-s),V).$$

Since $\St(V,\St(T,U))\subseteq \St(T,U)$ and $\St(V,\St(T,U)-s)\subseteq \St(T,U)$, it follows that $\mathcal{R}_{\St(T,U)}\subseteq\mathcal{R}_{\St(V,\St(T,U))}$ and $\mathcal{R}_{\St(T,U)}\subseteq\mathcal{R}_{\St(V,\St(T,U)-s)}$. Since $\Q\in \mathcal{R}_{\St(T,U)}$, it follows by induction that the left-hand term and the middle term of the exact sequence are both concentrated in dimension $k-1$. Since $L_\Q^2H_\ast(\Omega(\St(T,U),T),\partial)$ vanishes for $\ast<k$ (Proposition \ref{prop:ruinchaincpx}), it follows that $L_\Q^2H_\ast(\Omega(\St(T,U),T),\partial)=L_\Q^2H_\ast(\Omega(U,T),\partial)$ is concentrated in dimension $k$.

\end{proof}

\begin{remark}\label{remark:concentrationsubruins}
Suppose that $T\in\mathcal{S}$ and that $\Q\in \mathcal{R}_{\St(T)}$. Then, for $U\in \mathcal{S}_{\geq T}$, Theorem \ref{thm:homologykruin} implies that $L^2_\Q H_\ast(\Omega(S,U),\partial)$ is concentrated in dimension $|U|$. This is because, if $T\subset U$, then $\St(U)\subset St(T)$. Therefore $\Q\in \mathcal{R}_{\St(T)}\subseteq\mathcal{R}_{\St(U)}$.

\end{remark}





\section{A spectral sequence} In this section, we define a spectral sequence following the line laid down in \cite[$\S$ 2]{DO2}.

A \textit{poset of coefficients} is a contravariant functor $\mathcal{A}$ from a poset $\mathcal{P}$ to the category of abelian groups. In other words, it is a collection $\{\mathcal{A}\}_{a\in\mathcal{P}}$ of abelian groups together with homomorphisms $\phi_{ba}: \mathcal{A}_a\rightarrow\mathcal{A}_b$, defined whenever $a>b$, such that $\phi_{ca}=\phi_{cb}\phi_{ba}$, whenever $a>b>c$. The functor $\mathcal{A}$ gives us a system of coefficients on the cell complex Flag($\mathcal{P}$): it associates to the simplex $\sigma$ the abelian group $\mathcal{A}_{\min(\sigma)}$. Hence, we get a cochain complex $$C^j(\text{Flag}(\mathcal{P});\mathcal{A}):=\bigoplus_{\sigma\in \text{Flag}(\mathcal{P})^{(j)}}\mathcal{A}_{\min(\sigma)},$$

where $\text{Flag}(\mathcal{P})^{(j)}$ denotes the set of $j$-simplices in Flag($\mathcal{P}$).

Let $Y$ be a CW complex. A \textit{poset of spaces} in $Y$ over $\mathcal{P}$ is a cover $\mathcal{V}=\{Y_a\}_{a\in\mathcal{P}}$ of $Y$ by subcomplexes indexed by $\mathcal{P}$ so that if $N(\mathcal{V})$ denotes the nerve of the cover, then:

\begin{enumerate}[(i)]
  \item $a<b \Longrightarrow Y_a\subset Y_b$,
  \item the vertex set Vert($\sigma$) of each simplex in $N(\mathcal{V})$ has the greatest lower bound $\wedge\sigma$ in $\mathcal{P}$, and
  \item $\mathcal{V}$ is closed under taking finite nonempty intersections, i.e., for any simplex $\sigma$ of $N(\mathcal{V})$, $$\bigcap_{a\in\sigma} Y_a=Y_{\wedge\sigma}.$$
\end{enumerate}

Note that any cover leads to a poset of spaces by taking all nonempty intersections as elements of the new cover and removing duplicates. The resulting poset is the set of all nonempty intersections, ordered by inclusion.

The following lemmas appearing in \cite{DO2} define a spectral sequence associated to a poset of spaces, and give conditions for the sequence to degenerate.

\begin{lemma}[{\cite[Lemma 2.1]{DO2}}]
\label{lemma:spectralseq}
Suppose $\mathcal{V}=\{Y_a\}_{a\in\mathcal{P}}$ is a poset of spaces for $Y$ over $\mathcal{P}$. There is a Mayer-Vietoris type spectral sequence converging to $H^*(Y)$ with $E_1$-term: $$E_1^{i,j}=C^i(\text{Flag}(\mathcal{P}); \mathcal{H}^j(\mathcal{V})),$$ and $E_2$-term: $$E_2^{i,j}=H^i(\text{Flag}(\mathcal{P}); \mathcal{H}^j(\mathcal{V})),$$ where the coefficient system $\mathcal{H}^j(\mathcal{V})$ is given by $\mathcal{H}^j(\mathcal{V})(\sigma)=H^j(Y_{\min(\sigma)})$.
\end{lemma}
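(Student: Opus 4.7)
The plan is to set up a Mayer--Vietoris style double complex and extract the spectral sequence from its two standard filtrations. Concretely, I would define
\[
E_0^{i,j} \;=\; \bigoplus_{\sigma \in \mathrm{Flag}(\mathcal{P})^{(i)}} C^j(Y_{\min(\sigma)}),
\]
where $C^\ast$ denotes cellular cochains. The vertical differential is the cellular coboundary in each summand. The horizontal differential is the \v{C}ech-type coboundary: for a chain $\tau = (a_0 < a_1 < \dots < a_{i+1})$, the only face $\partial_k\tau$ whose minimum changes is $\partial_0\tau$, so the $k=0$ contribution to $d_h\omega(\tau)$ is obtained by restricting $\omega(\partial_0\tau) \in C^j(Y_{a_1})$ along the inclusion $Y_{a_0} \hookrightarrow Y_{a_1}$, while the remaining contributions use the identity; the usual alternating signs make $d_h$ square to zero and anticommute with the vertical differential.

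Taking vertical cohomology first yields
\[
E_1^{i,j} = \bigoplus_{\sigma \in \mathrm{Flag}(\mathcal{P})^{(i)}} H^j(Y_{\min(\sigma)}) = C^i(\mathrm{Flag}(\mathcal{P}); \mathcal{H}^j(\mathcal{V})),
\]
and the induced $d_1$-differential is precisely the simplicial coboundary on $\mathrm{Flag}(\mathcal{P})$ with coefficients in $\mathcal{H}^j(\mathcal{V})$, giving the desired $E_2^{i,j} = H^i(\mathrm{Flag}(\mathcal{P});\mathcal{H}^j(\mathcal{V}))$.

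To identify the abutment as $H^\ast(Y)$, I would run the other filtration (horizontal cohomology first) and show that it collapses to $H^\ast(Y)$. The crux is to prove that, for fixed $j$, the horizontal complex has cohomology concentrated in column zero with value $C^j(Y)$. For this I would decompose cellwise: for each $j$-cell $e \subset Y$, set $\mathcal{P}_e := \{a \in \mathcal{P} \mid e \in Y_a\}$. By (i), $\mathcal{P}_e$ is an upper set in $\mathcal{P}$, so the $e$-summand of the horizontal complex is the simplicial cochain complex of $\mathrm{Flag}(\mathcal{P}_e)$. By (ii) and (iii), any finite subset of $\mathcal{P}_e$ has a greatest lower bound that again lies in $\mathcal{P}_e$, since the corresponding intersection of $Y_a$'s equals $Y_{\wedge}$ and still contains $e$; hence $\mathcal{P}_e$ has a minimum $m_e$, making $\mathrm{Flag}(\mathcal{P}_e)$ a cone with apex $m_e$ and therefore contractible. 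Summing over cells, the horizontal cohomology is $C^j(Y)$ in column $0$ and vanishes elsewhere, and the induced $d_1$ is easily checked to recover the cellular coboundary of $C^\ast(Y)$ (since restriction to a subcomplex commutes with coboundary on cells of that subcomplex). Thus the second spectral sequence collapses with $E_2^{0,j} = H^j(Y)$, and both spectral sequences have the same abutment $H^\ast(Y)$.

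The step I expect to be the main obstacle is the contractibility of $\mathrm{Flag}(\mathcal{P}_e)$: this is precisely where conditions (ii) and (iii) of the poset of spaces are used in an essential way. Everything else amounts to bookkeeping with signs and the standard two-spectral-sequences machinery for a first-quadrant double complex, where convergence is automatic.
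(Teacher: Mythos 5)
The paper gives no proof of this lemma, simply citing \cite[Lemma 2.1]{DO2}, and your double-complex argument with the two standard filtrations is precisely the standard proof of that result; it is correct, and you rightly identify the crux as conditions (ii) and (iii) forcing each $\mathrm{Flag}(\mathcal{P}_e)$ to be a cone over the minimum of $\mathcal{P}_e$. The only implicit hypotheses are the finiteness assumptions (on $\mathcal{P}$ and on the relevant cochain groups) needed to interchange the cellwise decomposition with the direct sum over simplices and to get convergence for free, and these hold in every application made in the paper.
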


\begin{lemma}[{\cite[Lemma 2.2]{DO2}}]
\label{lemma:spectralseqdeg}
Suppose that $\mathcal{V}:=\{Y_a\}_{a\in\mathcal{P}}$ is a poset of spaces for $Y$ over $\mathcal{P}$. If for every $a\in\mathcal{P}$, the induced homomorphism $H^\ast(Y_a)\rightarrow H^\ast(Y_{<a})$ is the zero map, then the spectral sequence degenerates at $E_2$ and $$H^\ast(Y)=\bigoplus_{a\in\mathcal{P}}H^i(\text{Flag}(\mathcal{P}_{\geq a}),\text{Flag}(\mathcal{P}_{>a}),H^j(Y_a)).$$
\end{lemma}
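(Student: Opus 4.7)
The plan is to decompose the coefficient system appearing in the $E_1$-page of the spectral sequence of Lemma \ref{lemma:spectralseq}, using the vanishing hypothesis, and then promote the resulting decomposition to $E_\infty$. I would first unpack the $d_1$-differential on $E_1^{p,j} = C^p(\text{Flag}(\mathcal{P}); \mathcal{H}^j(\mathcal{V}))$ at a simplex $\tau = (b_0 < \cdots < b_{p+1})$: all face maps $\partial_i\tau$ with $i\geq 1$ preserve the minimum $b_0$, while $\partial_0 \tau$ has minimum $b_1$ and contributes via the restriction map $\phi_{b_0,b_1}: H^j(Y_{b_1}) \to H^j(Y_{b_0})$ induced by $Y_{b_0}\subset Y_{b_1}$. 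Since $b_0 < b_1$ gives $Y_{b_0}\subseteq Y_{<b_1}$, this restriction factors as $H^j(Y_{b_1}) \to H^j(Y_{<b_1}) \to H^j(Y_{b_0})$ and vanishes by hypothesis.

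It follows that the coefficient system splits as a direct sum of functors $\mathcal{H}^j(\mathcal{V}) = \bigoplus_{a\in\mathcal{P}} \mathcal{H}^j_a(\mathcal{V})$, where $\mathcal{H}^j_a$ assigns $H^j(Y_a)$ to simplices with minimum $a$ and $0$ otherwise. Since simplices with minimum $a$ are precisely those in $\text{Flag}(\mathcal{P}_{\geq a})$ that are not in $\text{Flag}(\mathcal{P}_{>a})$, this yields
$$E_2^{i,j} = \bigoplus_{a\in\mathcal{P}} H^i(\text{Flag}(\mathcal{P}_{\geq a}),\text{Flag}(\mathcal{P}_{>a}); H^j(Y_a)).$$
For degeneration, I would then filter $Y$ by sub-ideals of $\mathcal{P}$: a linear extension of the poset order yields an exhausting filtration $Y_{\leq a_1}\subset Y_{\leq a_2}\subset \cdots = Y$, with $Y_{\leq a_k} = Y_{a_k}$ by property (i) of a poset of spaces. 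The vanishing hypothesis makes each long exact sequence of the pair $(Y_{a_k},Y_{<a_k})$ split into short exact sequences $0 \to H^{i-1}(Y_{<a_k}) \to H^i(Y_{a_k},Y_{<a_k}) \to H^i(Y_{a_k}) \to 0$. Assembling these splittings inductively along the filtration, and identifying $H^*(Y_a,Y_{<a})$ with the corresponding local summand via a Mayer--Vietoris argument on the restricted sub-cover of $Y_a$, one obtains $H^*(Y)$ as the direct sum asserted. Since the total dimensions match $E_2$, all higher differentials must vanish.

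The main obstacle I anticipate is this final degeneration step: the $E_2$-decomposition is formal once the coefficient system splits, but promoting it to $E_\infty$ requires either the inductive splitting just sketched or a more delicate chain-level argument on the bicomplex $C^{p,q}=\bigoplus_{\sigma^{(p)}} C^q(Y_{\min\sigma})$. A cleaner version of the latter would use the vanishing hypothesis to lift each class in $H^j(Y_a)$ to a cochain whose restriction to every $Y_b$ with $b<a$ is a coboundary, and piece such compatible lifts together into an explicit retraction of the column filtration onto $\bigoplus_a C^*(\text{Flag}(\mathcal{P}_{\geq a}),\text{Flag}(\mathcal{P}_{>a}); C^*(Y_a))$, which would simultaneously establish the direct sum decomposition and the degeneration.
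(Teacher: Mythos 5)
This lemma is quoted from \cite[Lemma 2.2]{DO2}; the paper itself supplies no proof, so your proposal can only be measured against that reference and against correctness. The first half of your argument is correct and is the standard one: for $a<b$ the structure map $H^j(Y_b)\to H^j(Y_a)$ of the coefficient system factors through $H^j(Y_{<b})$ and hence vanishes, so $C^*(\text{Flag}(\mathcal{P});\mathcal{H}^j(\mathcal{V}))$ splits into the subcomplexes spanned by simplices with a fixed minimum $a$, each of which is $C^*(\text{Flag}(\mathcal{P}_{\geq a}),\text{Flag}(\mathcal{P}_{>a});H^j(Y_a))$. That yields the stated $E_2$-term.

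The degeneration and the direct-sum decomposition of $H^*(Y)$ is where there is a genuine gap. First, $Y_{\leq a_k}=Y_{a_k}$ is false for a general linear extension: the $k$-th filtration term is $F_k=\bigcup_{j\leq k}Y_{a_j}$, and what properties (i) and (iii) actually give is $Y_{a_k}\cap F_{k-1}=Y_{<a_k}$, hence $H^*(F_k,F_{k-1})\cong H^*(Y_{a_k},Y_{<a_k})$ by excision. Second, and more seriously, your short exact sequences control the pairs $(Y_a,Y_{<a})$, not the inclusions $F_{k-1}\hookrightarrow F_k$: the hypothesis says nothing about $H^*(F_k)\to H^*(F_{k-1})$, so ``assembling the splittings along the filtration'' only reproduces the spectral sequence of that filtration, not a direct sum. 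Indeed the assembled answer is wrong: take $\mathcal{P}=\{c<a,\ c<b\}$ with $Y=Y_a\cup Y_b$ and $Y_a\cap Y_b=Y_c$. Your assembly yields $H^n(Y_c)\oplus H^n(Y_a,Y_c)\oplus H^n(Y_b,Y_c)\cong H^n(Y_a)\oplus H^n(Y_b)\oplus H^n(Y_c)\oplus H^{n-1}(Y_c)\oplus H^{n-1}(Y_c)$, whereas Mayer--Vietoris together with the hypothesis gives the lemma's answer $H^n(Y)\cong H^n(Y_a)\oplus H^n(Y_b)\oplus H^{n-1}(Y_c)$. Finally, ``the total dimensions match $E_2$'' is circular, since the size of $H^*(Y)$ is precisely what is being computed. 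It is worth noting that in the one place this paper applies the lemma (Theorem \ref{thm:l2kskeleton}) each $a$-summand of $E_2$ sits in a single bidegree on one antidiagonal, so degeneration is automatic there; but the general statement requires actually killing the differentials $d_r$ for $r\geq 2$ (as is done in \cite{DO2}, by an induction over the poset rather than over a linear refinement), and your proposal does not accomplish this.
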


\section{$L_\Q^2$-(co)homology of $(\Sigma,\Sigma^{(k-1)})$}

To simplify notation, write $\Sigma$ for $\Sigma_{cc}(W,S)$ and let $\Sigma^{(k-1)}$ denote the $(k-1)$-skeleton of $\Sigma_{cc}(W,S)$. For the proofs in this section, we will also write $H^\ast_\Q(\mathcal{U})$ for $H_W^\ast(\mathcal{U},\mathcal{N}_\Q(W))$. Refer to Section \ref{sec:altbetti} for the definition of $H_W^\ast(\mathcal{U},\mathcal{N}_\Q(W))$.

The following lemma is standard.
\begin{lemma}
\label{lemma:homologyskeltonsimplex}
Let $\Delta$ denote the standard $n$-simplex and let $\Delta^{(k)}$ be its $k$-skeleton, $k<n$. Then the reduced homology $\tilde{H}_\ast(\Delta^{(k)})$ (with coefficients in $\mathbb{R}$) is concentrated in dimension $k$. 
\end{lemma}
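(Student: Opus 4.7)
The plan is to exploit the contractibility of $\Delta$ together with the fact that $\Delta^{(k)}$ agrees with $\Delta$ in dimensions $\le k$ and has no cells in dimensions $>k$; this forces the reduced simplicial chain complex of $\Delta^{(k)}$ to be the truncation, in degrees $\le k$, of the augmented (hence acyclic) chain complex of $\Delta$.

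For the vanishing above $k$, there is nothing to do: since $\Delta^{(k)}$ has no simplices of dimension greater than $k$, the chain groups $C_i(\Delta^{(k)};\mathbb{R})$ vanish for $i>k$, and hence $\tilde{H}_i(\Delta^{(k)};\mathbb{R})=0$ in that range. For the vanishing in dimensions $i<k$, I would use the inclusion of reduced chain complexes $\tilde{C}_\ast(\Delta^{(k)})\hookrightarrow\tilde{C}_\ast(\Delta)$, which is an equality in degrees $\le k$. Given a cycle $c\in\tilde{C}_i(\Delta^{(k)})$ with $i<k$, exactness of $\tilde{C}_\ast(\Delta)$ produces a chain $c'\in C_{i+1}(\Delta)$ with $\partial c'=c$; since $i+1\le k$, we have $c'\in C_{i+1}(\Delta^{(k)})$, so $c$ already bounds in $\Delta^{(k)}$. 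Hence $\tilde{H}_i(\Delta^{(k)})=0$ for $i<k$, and the reduced homology is concentrated in degree $k$.

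There is no real obstacle here: the argument is purely formal. One could equivalently invoke the long exact sequence of the pair $(\Delta,\Delta^{(k)})$, which (using $\tilde{H}_\ast(\Delta)=0$) yields $\tilde{H}_i(\Delta^{(k)})\cong H_{i+1}(\Delta,\Delta^{(k)})$, after which the vanishing outside degree $k$ follows from the observation that the relative simplicial chain complex is concentrated in degrees $\ge k+1$. But the direct chain-level argument above is the cleanest, and it also makes transparent that the surviving group $\tilde{H}_k(\Delta^{(k)};\mathbb{R})$ is identified with the space of $k$-cycles of $\Delta$, equivalently the image of $\partial_{k+1}$.
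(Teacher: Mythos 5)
Your argument is correct: the vanishing above degree $k$ is immediate from the absence of cells, and the vanishing below degree $k$ follows correctly from the acyclicity of the augmented chain complex of $\Delta$ together with the fact that the chain complexes of $\Delta$ and $\Delta^{(k)}$ coincide in degrees $\le k$. The paper states this lemma as standard and omits the proof entirely, and your chain-level argument (equivalently, the long exact sequence of the pair $(\Delta,\Delta^{(k)})$ you mention) is exactly the standard one intended.
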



\begin{theorem}
\label{thm:l2kskeleton}
Let $k\geq 1$. Suppose that for every $T\in\mathcal{S}^{(k)}$, $\Q\in\mathcal{R}_{St(T)}$, and let $\Sigma^{(k-1)}$ denote the $(k-1)$-skeleton of $\Sigma$. Then $L_\Q^2 H_\ast(\Sigma,\Sigma^{(k-1)})$ is concentrated in dimension $k$. 

\end{theorem}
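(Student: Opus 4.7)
The vanishing $L_\Q^2 H_i(\Sigma, \Sigma^{(k-1)}) = 0$ for $i < k$ is automatic since the relative chain complex $L_\Q^2 C_\ast(\Sigma, \Sigma^{(k-1)})$ vanishes in those degrees. The content of the theorem is therefore the vanishing for $i > k$.

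My first observation is that the hypothesis propagates from $\mathcal{S}^{(k)}$ to all of $\mathcal{S}^{(\geq k)}$: if $T \in \mathcal{S}^{(\geq k)}$ and $T_0 \subseteq T$ with $T_0 \in \mathcal{S}^{(k)}$, then $\St(T) \subseteq \St(T_0)$, so $\mathcal{R}_{\St(T_0)} \subseteq \mathcal{R}_{\St(T)}$, giving $\Q \in \mathcal{R}_{\St(T)}$. By Theorem \ref{thm:homologykruin} together with Remark \ref{remark:concentrationsubruins}, this yields concentration of $L_\Q^2 H_\ast(\Omega(S, T), \partial\Omega(S, T))$ in dimension $|T|$ for every $T \in \mathcal{S}^{(\geq k)}$.

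To leverage these concentration results, I would apply the spectral sequence of Lemma \ref{lemma:spectralseq} to the poset of spaces $\{\Omega(S, T)\}_{T \in \mathcal{S}^{(\geq k)}}$ in $\Sigma$, indexed by $\mathcal{S}^{(\geq k)}$ with reverse inclusion; intersections are again ruins via $\bigcap_i \Omega(S, T_i) = \Omega(S, \bigcup T_i)$ whenever $\bigcup T_i$ is spherical. The degeneracy criterion of Lemma \ref{lemma:spectralseqdeg} is the key input: the inclusion-induced maps between cohomologies of ruins at different cardinalities can be shown to vanish by combining the long exact sequences of the pairs $(\Omega(S, T), \partial\Omega(S, T))$ with the ruin pair concentration. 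Together with the long exact sequence of $(\Sigma, \Sigma^{(k-1)})$ and the fact that $\Sigma^{(k-1)}$ has dimension $\leq k-1$ (and hence its $L_\Q^2$-homology is supported in degrees $\leq k-1$), this would yield the desired vanishing in degrees $i > k$.

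The main obstacle is the technical reduction from the plain ruin cohomologies $L_\Q^2 H^\ast(\Omega(S, T))$ (which appear on the $E_1$ page of the spectral sequence) to the ruin pair cohomologies $L_\Q^2 H^\ast(\Omega(S, T), \partial\Omega(S, T))$ (for which we have the concentration statement), and the verification of Lemma \ref{lemma:spectralseqdeg}'s hypothesis in this context. This will require careful bookkeeping with the pair long exact sequences and the combinatorics of nested ruins, and is the step where the balance between the hypothesis $\Q \in \mathcal{R}_{\St(T)}$ and the concentration dimension $|T|$ must be tracked most precisely.
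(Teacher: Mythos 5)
Your opening paragraph matches the paper: the vanishing below degree $k$ is indeed automatic, and the propagation of the hypothesis from $\mathcal{S}^{(k)}$ to $\mathcal{S}^{(\geq k)}$ via $\St(T)\subseteq\St(T_0)$ is exactly Remark \ref{remark:concentrationsubruins}. The overall strategy (ruin concentration fed into the spectral sequence of Lemmas \ref{lemma:spectralseq} and \ref{lemma:spectralseqdeg}) is also the paper's. But the step you defer as ``the main obstacle'' is where the proof actually lives, and as you have set things up it does not go through. You propose to cover $\Sigma$ by the plain ruins $\Omega(S,T)$, $T\in\mathcal{S}^{(\geq k)}$. Two problems: (i) the intersection formula $\bigcap_i\Omega(S,T_i)=\Omega(S,\bigcup T_i)$ fails for closed ruins --- two ruins can share low-dimensional faces (e.g.\ vertices) even when $\bigcup T_i$ is not spherical --- and the union of these ruins need not be all of $\Sigma$ if $\mathcal{S}$ has maximal elements of cardinality less than $k$; (ii) the $E_1$-page would then carry the absolute cohomologies $L^2_\Q H^\ast(\Omega(S,T))$, for which no concentration statement is available, and there is no ``technical reduction'' to the pair version. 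The paper avoids all of this by working algebraically: the relative cochain complex $L^2_\Q C^\ast(\Sigma,\Sigma^{(k-1)})$ equals $\bigoplus_{U\in\mathcal{S}^{(i)}}H_U$ in each degree $i\geq k$, and by Proposition \ref{prop:ruinchaincpx} the summands with $U\supseteq T$ form precisely the (reindexed) cochain complex of the pair $(\Omega(S,T),\partial\Omega(S,T))$. These subcomplexes are the ``poset of spaces,'' the intersection formula becomes the clean identity $H_T\cap H_V=H_{T\cup V}$, the $E_1$ coefficients are exactly the relative ruin cohomologies you control, and the degeneration hypothesis is verified by an inductive Mayer--Vietoris argument on unions of these subcomplexes rather than by the pair long exact sequences you suggest.

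Even granting degeneration, your proposal omits the second half of the argument. Lemma \ref{lemma:spectralseqdeg} expresses the answer as a sum over $U$ of $H^\ast(\text{Flag}(\mathcal{P}_{\geq U}),\text{Flag}(\mathcal{P}_{>U}))$ with coefficients concentrated in (reindexed) degree $-|U|$; to conclude concentration of the total cohomology in the single degree $k$ one must show each of these relative flag-complex cohomologies is concentrated in degree $|U|-k$. This is a genuine computation: $\text{Flag}(\mathcal{P}_{>U})$ is the barycentric subdivision of $\partial\Delta^{|U|-1}$ with the barycenters of faces of dimension at most $k-2$ deleted, and the concentration follows from Alexander duality together with Lemma \ref{lemma:homologyskeltonsimplex}. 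Your substitute --- the long exact sequence of $(\Sigma,\Sigma^{(k-1)})$ together with $\dim\Sigma^{(k-1)}\leq k-1$ --- cannot do this job: that sequence only identifies $L^2_\Q H_i(\Sigma,\Sigma^{(k-1)})$ with $L^2_\Q H_i(\Sigma)$ for $i>k$, and the vanishing of the latter is Corollary \ref{cor:vanishingstars}, i.e.\ a consequence of the theorem rather than an admissible input.
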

\begin{proof}
We will show that $H_\Q^\ast(\Sigma,\Sigma^{(k-1)})$ is concentrated in dimension $k$.

Consider the relative cochain complex $L^2_\Q C^\ast(\Sigma,\Sigma^{(k-1)})$. We have that
$$L^2_\Q C^i(\Sigma,\Sigma^{(k-1)})=\left\{                                                      \begin{array}{ll}
                                                                    0, & \hbox{$i\leq k-1$;} \\
                                                                    \bigoplus_{U\in\mathcal{S}^{(i)}} H_U, & \hbox{$i>k-1$.}
                                                                  \end{array}
                                                                \right.$$
Set $C_{-i}(\Sigma,\Sigma^{(k-1)})=L^2_\Q C^i(\Sigma,\Sigma^{(k-1)})$. Then $C_\ast(\Sigma,\Sigma^{(k-1)})$ is a chain complex. For every $T\in\mathcal{S}$ let $\tilde{\Omega}_T$ denote the complex with reindexed chain complex $C_{-i}(\tilde{\Omega}_T)=L^2_\Q C^i(\Omega(S,T),\partial\Omega(S,T))$. In this way, we have made the cochain complex of every $(S,T)$-ruin a subcomplex of the re-indexed relative cochain complex $C_\ast(\Sigma,\Sigma^{(k-1)})$.

Let $\mathcal{P}$ be the poset $\mathcal{S}^{(\geq k)}$ with the order reversed. By the above re-indexing of cochain complexes, $\{\tilde{\Omega}_T\}_{T\in\mathcal{P}}$ is a poset of spaces over the complex $Y$ with chain complex $C_\ast(\Sigma,\Sigma^{(k-1)})$, and hence we have the spectral sequence of Lemma \ref{lemma:spectralseq}.

We first establish the condition of Lemma \ref{lemma:spectralseqdeg}. So, we claim that for every $U\in\mathcal{P}$, the induced map $H_\Q^\ast(\tilde{\Omega}_U)\rightarrow H_\Q^\ast(\tilde{\Omega}_{<U})$ is the zero map. To prove the claim we will show that $H_\Q^{-|U|}(\tilde{\Omega}_{<U})=0$, as this implies $H_\Q^\ast(\tilde{\Omega}_U)\rightarrow H_\Q^\ast(\tilde{\Omega}_{<U})$ is the zero map since $H_\Q^\ast(\tilde{\Omega}_U)$ is concentrated in dimension $-|U|$ (see Remark \ref{remark:concentrationsubruins}). Recall that $\tilde{\Omega}_{<U}=\bigcup_{T\in\mathcal{P}_{<U}}\tilde{\Omega}_{T}$. The proof is by induction. For the base case, note that for every spherical $V$ properly containing $U$, the induced map $H_\Q^\ast(\tilde{\Omega}_U)\rightarrow H_\Q^\ast(\tilde{\Omega}_{V})$ is the zero map. This is because of Theorem \ref{thm:homologykruin}, which states that $H_\Q^\ast(\tilde{\Omega}_U)$ and $H_\Q^\ast(\tilde{\Omega}_{V})$ are concentrated in dimension $-|U|$ and $-|V|$, respectively, and $-|V|<-|U|$. Let $\mathcal{C}$ be a subcollection of elements of $\mathcal{P}_{<U}$ and let $B=\bigcup_{T\in\mathcal{C}}\tilde{\Omega}_{T}$. We wish to show that $H_\Q^{-|U|}(B)=0$. Write $B=A\cup \tilde{\Omega}_V$, where $V\in\mathcal{C}$ and $A=\bigcup_{\substack{T\in\mathcal{C}\\ T\neq V}}\tilde{\Omega}_T$. Then we have the Mayer-Vietoris sequence:

$$\begin{tikzcd}[column sep=small]
 H_\Q^{-|U|-1}(A\cap \tilde{\Omega}_V) \arrow{r} &  H_\Q^{-|U|}(B) \arrow{r} & H_\Q^{-|U|}(A)\oplus H_\Q^{-|U|}(\tilde{\Omega}_V)
\end{tikzcd}$$

By induction, $H_\Q^\ast(A)$ vanishes for $\ast\geq -|U|$, and by Theorem \ref{thm:homologykruin}, $H_\Q^\ast(\tilde{\Omega}_V)$ is concentrated in $-|V|<-|U|$. We now claim $H_\Q^\ast(A\cap \tilde{\Omega}_V)$ vanishes for $\ast\geq -|U|-1$, as this implies $H_\Q^{-|U|}(B)=0$. We observe that

$$A\cap\tilde{\Omega}_V=\bigcup_{\substack{T\in\mathcal{C}\\ T\neq V}} \tilde{\Omega}_{T}\cap\tilde{\Omega}_V=\bigcup_{\substack{T\in\mathcal{C}\\ T\cup V\in\mathcal{S}}} \tilde{\Omega}_{T\cup V}.$$

The last equality follows from the fact that $H_T\cap H_V=H_{T\cup V}$ whenever $T\cup V$ is spherical (see Section \ref{sec:hts}). Thus $A\cap \tilde{\Omega}_V$ is the union of elements of the form $\tilde{\Omega}_{T\cup V}$, where $|T\cup V|\geq |V|+1> |U|+1$. By Theorem \ref{thm:homologykruin}, $H_\Q^\ast(\tilde{\Omega}_{T\cup V})$ vanishes for $\ast\geq -|U|-1$, so by induction $H_\Q^{-|U|-1}(A\cap \tilde{\Omega}_V)=0$.

We have established the condition in Lemma \ref{lemma:spectralseqdeg}, and hence
\begin{equation}\label{eq:totalhom}H_\Q^{-n}(Y)=\bigoplus_{U\in\mathcal{P}}H^{|U|-n}(\text{Flag}(\mathcal{P}_{\geq U}),\text{Flag}(\mathcal{P}_{>U}); H_\Q^{-|U|}(\tilde{\Omega}_U))\end{equation}

The strategy of the proof is as follows. By Theorem \ref{thm:homologykruin}, for every $U\in\mathcal{P}$, $H_\Q^\ast(\tilde{\Omega}_U)$ is concentrated in dimension $-|U|$. So, we are done if we show that for every $U\in\mathcal{P}$, $H^\ast(\text{Flag}(\mathcal{P}_{\geq U}),\text{Flag}(\mathcal{P}_{>U}))$ is concentrated in dimension $|U|-k$. This implies $E^{i,j}_2=0$ unless $i+j=-k$, and by (\ref{eq:totalhom}), $H_\Q^\ast(Y)$ is concentrated in dimension $-k$. Re-indexing our complexes, it follows that the cohomology of the complex $L^2_\Q C^\ast(\Sigma,\Sigma^{(k-1)})$ is concentrated in dimension $k$.

We claim that for $U\in\mathcal{P}$ with $m=|U|$, $H^\ast(\text{Flag}(\mathcal{P}_{\geq U}),\text{Flag}(\mathcal{P}_{>U}))$ is concentrated in dimension $m-k$.

Since the geometric realization of $\text{Flag}(\mathcal{P}_{\geq U})$ is contractible, by the long exact sequence for the pair it suffices to show that the reduced cohomology $\tilde{H}^\ast(\text{Flag}(\mathcal{P}_{>U}))$ is concentrated in dimension $m-k-1$. Note that if $m=k$, then we are done since $\text{Flag}(\mathcal{P}_{> U})=\emptyset$. Also, note that for the special case where $m-k=1$, the map $H^0(\text{Flag}(\mathcal{P}_{\geq U}))\rightarrow H^0(\text{Flag}(\mathcal{P}_{>U}))$ in the long exact sequence for the pair $(\text{Flag}(\mathcal{P}_{\geq U}),\text{Flag}(\mathcal{P}_{> U}))$ is injective ($\text{Flag}(\mathcal{P}_{\geq U})$ is the cone on $\text{Flag}(\mathcal{P}_{>U})$), so showing $\tilde{H}^\ast(\text{Flag}(\mathcal{P}_{>U}))$ is concentrated in dimension $m-k-1=0$ does in fact suffice.

Consider the poset $\mathcal{S}^{op}$, which is the poset $\mathcal{S}$ of spherical subsets with order reversed. Note that $\text{Flag}(\mathcal{S}^{op}_{> U})\cong \text{Flag}(\mathcal{S}_{< U})$. The geometric realization of $\text{Flag}(\mathcal{S}_{< U})$ is $b\Delta$, where $b\Delta$ is the barycentric subdivision of the $(m-1)$-dimensional simplex $\Delta$. This is because $\mathcal{S}_{< U}$ is the poset of proper subsets of $U$. Note that $\text{Flag}(\mathcal{P}_{>U})$ is a subcomplex of $\text{Flag}(\mathcal{S}^{op}_{> U})$, and more precisely, the geometric realization of $\text{Flag}(\mathcal{P}_{> U})$ is the subcomplex of barycentric subdivision of $\partial\Delta$ (recall that $k\geq 1$) obtained by removing barycenters corresponding to spherical subsets of cardinality less than $k$.

\begin{figure}[H]
\setlength{\tabcolsep}{15pt}
\begin{tabular}{c c c}
$$\begin{tikzpicture}[scale=1]
\tikzstyle{vertex}=[circle, fill=black, draw, inner sep=0pt, minimum size=2pt]
\tikzstyle{every node}=[circle,inner sep=0pt,minimum size=0pt]
\vertex (x) at (0,0) {};
\vertex (y) at (2,0) {};
\vertex (z) at (1,1.5) {};
\vertex (c) at (1, .5) {};

\path
    (x) edge node[pos=.5, circle, fill=black, draw, inner sep=0pt, minimum size=2pt] (xy) {} (y)
    (y) edge node[pos=.5, circle, fill=black, draw, inner sep=0pt, minimum size=2pt] (yz) {} (z)
    (z) edge node[pos=.5, circle, fill=black, draw, inner sep=0pt, minimum size=2pt] (xz) {} (x)
    (x) edge (c)
    (y) edge (c)
    (z) edge node[pos=1,below=2pt]{$\emptyset$} (c)
    (xy) edge (c)
    (yz) edge (c)
    (xz) edge (c)
    ;

    \node [below=.05cm] at (0,0)
        {
            $\{s\}$
        };
    \node [below=.02cm] at (1,0)
        {
            $\{s,t\}$
        };
    \node [below=.05cm] at (2,0)
        {
            $\{t\}$
        };
    \node [above=.05cm] at (1,1.5)
        {
            $\{u\}$
        };
    \node [right=.05cm] at (yz)
        {
            $\{t,u\}$
        };
    \node [left=.05cm] at (xz)
        {
            $\{s,u\}$
        };

\end{tikzpicture}$$ & $$\begin{tikzpicture}[scale=1]
\tikzstyle{vertex}=[circle, fill=black, draw, inner sep=0pt, minimum size=2pt]
\tikzstyle{every node}=[circle,inner sep=0pt,minimum size=0pt]
\vertex (x) at (0,0) {};
\vertex (y) at (2,0) {};
\vertex (z) at (1,1.5) {};

\path
    (x) edge node[pos=.5, circle, fill=black, draw, inner sep=0pt, minimum size=2pt] (xy) {} (y)
    (y) edge node[pos=.5, circle, fill=black, draw, inner sep=0pt, minimum size=2pt] (yz) {} (z)
    (z) edge node[pos=.5, circle, fill=black, draw, inner sep=0pt, minimum size=2pt] (xz) {} (x)

    ;

    \node [below=.05cm] at (0,0)
        {
            $\{s\}$
        };
    \node [below=.02cm] at (1,0)
        {
            $\{s,t\}$
        };
    \node [below=.05cm] at (2,0)
        {
            $\{t\}$
        };
    \node [above=.05cm] at (1,1.5)
        {
            $\{u\}$
        };
    \node [right=.05cm] at (yz)
        {
            $\{t,u\}$
        };
    \node [left=.05cm] at (xz)
        {
            $\{s,u\}$
        };

\end{tikzpicture}$$ & $$\begin{tikzpicture}[scale=1]
\tikzstyle{vertex}=[circle, fill=black, draw, inner sep=0pt, minimum size=2pt]
\tikzstyle{every node}=[circle,inner sep=0pt,minimum size=0pt]
\node (x) at (0,0) {};
\node (y) at (2,0) {};
\node (z) at (1,1.5) {};

\path[transparent]
    (x) edge node[pos=.5, circle, fill=black, draw, inner sep=0pt, minimum size=1pt] (xy) {} (y)
    (y) edge node[pos=.5, circle, fill=black, draw, inner sep=0pt, minimum size=1pt] (yz) {} (z)
    (z) edge node[pos=.5, circle, fill=black, draw, inner sep=0pt, minimum size=1pt] (xz) {} (x)

    ;

    \node [below=.02cm] at (1,0)
        {
            $\{s,t\}$
        };
    \node [right=.05cm] at (yz)
        {
            $\{t,u\}$
        };
    \node [left=.05cm] at (xz)
        {
            $\{s,u\}$
        };
    \vertex at (xy) {};
    \vertex at (xz) {};
    \vertex at (yz) {};

\end{tikzpicture}$$\\
$\text{Flag}(\mathcal{S}^{op}_{> U})$ & $\text{Flag}(\mathcal{P}_{> U})$, $k=1 $ &$\text{Flag}(\mathcal{P}_{> U})$, $k=2 $
\end{tabular}
\caption{:\hskip2mm Geometric realizations when $U=\{s,t,u\}$}
\end{figure}
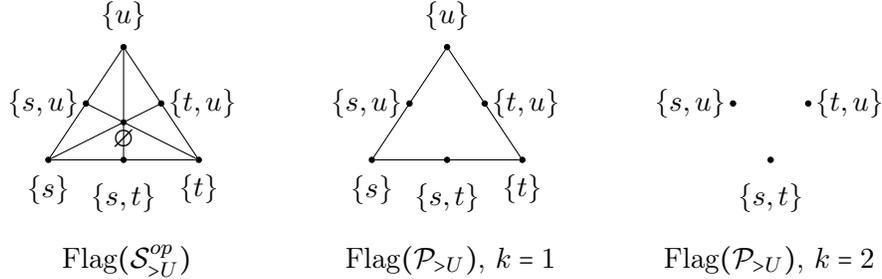

These barycenters correspond to faces of $\partial\Delta$ of dimension less than or equal to $k-2$. Hence $$\tilde{H}^\ast(\text{Flag}(\mathcal{P}_{> U}))\cong \tilde{H}^\ast(\partial\Delta-\Delta^{(k-2)}),$$ where $\Delta^{(k-2)}$ denotes the $(k-2)$-skeleton of $\Delta$. Note that if $k=1$, then $\Delta^{(k-2)}=\emptyset$, and we are done as the reduced homology $\tilde{H}^\ast(\partial\Delta)$ is concentrated in dimension $m-2$. So, suppose $k>1$. By Alexander Duality, $$\tilde{H}_\ast(\Delta^{(k-2)})\cong \tilde{H}^{m-\ast-3}(\partial\Delta-\Delta^{(k-2)}).$$ By Lemma \ref{lemma:homologyskeltonsimplex}, the reduced homology $\tilde{H}_\ast(\Delta^{(k-2)})$ is concentrated in dimension $k-2$. 
 It follows that $\tilde{H}^\ast(\partial\Delta-\Delta^{(k-2)})$ is concentrated in dimension $m-k-1$, and therefore the same holds for $\tilde{H}^\ast(\text{Flag}(\mathcal{P}_{>U}))$.




\end{proof}

\begin{remark}\label{remark:ruinconcentration}
In fact, the proof of Theorem \ref{thm:l2kskeleton} implies the following statement: if for every $U\in\mathcal{S}^{\geq k}$ we have that $L^2_\Q H_\ast(\Omega(S,U),\partial)$ is concentrated in dimension $|U|$, then $L_\Q^2 H_\ast(\Sigma,\Sigma^{(k-1)})$ is concentrated in dimension $k$. Thus, if one can show the aforementioned concentration of ruins, then Corollary \ref{cor:vanishingstars} implies that $L_\Q^2 H_n(\Sigma)=0$ for $n>k$.
\end{remark}

\begin{corollary}
\label{cor:vanishingstars}
Suppose that for every $T\in\mathcal{S}^{(k)}$, $\Q\in\mathcal{R}_{St(T)}$. Then $$L_\Q^2 H_n(\Sigma)=0 \text{ for } n>k.$$

\end{corollary}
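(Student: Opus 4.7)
The corollary is essentially a direct consequence of Theorem \ref{thm:l2kskeleton} combined with the long exact sequence of the pair $(\Sigma, \Sigma^{(k-1)})$. The plan is to write down this long exact sequence in weighted $L^2$-(co)homology and then observe that the two terms flanking $L^2_\Q H_n(\Sigma)$ both vanish for $n > k$.

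More precisely, the first step is to invoke the long exact sequence
\begin{equation*}
\cdots \longrightarrow L^2_\Q H_n(\Sigma^{(k-1)}) \longrightarrow L^2_\Q H_n(\Sigma) \longrightarrow L^2_\Q H_n(\Sigma, \Sigma^{(k-1)}) \longrightarrow L^2_\Q H_{n-1}(\Sigma^{(k-1)}) \longrightarrow \cdots
\end{equation*}
For the rightmost term, the hypothesis on $\Q$ together with Theorem \ref{thm:l2kskeleton} gives $L^2_\Q H_n(\Sigma, \Sigma^{(k-1)}) = 0$ for all $n \neq k$, and in particular for $n > k$. For the leftmost term, I would use that $\Sigma^{(k-1)}$ is the $(k-1)$-skeleton of $\Sigma_{cc}(W,S)$, whose cellular $L^2_\Q$-chain complex is supported in degrees $0,1,\dots,k-1$; hence $L^2_\Q H_m(\Sigma^{(k-1)}) = 0$ whenever $m \geq k$. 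Feeding these two vanishing statements into the long exact sequence for $n > k$ (so that both $n$ and $n-1$ are at least $k$) forces $L^2_\Q H_n(\Sigma) = 0$.

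There is essentially no hard step here; the only minor subtlety is to make sure we are consistently working with the Coxeter cellulation $\Sigma_{cc}(W,S)$, whose $(k-1)$-skeleton is the object denoted $\Sigma^{(k-1)}$ and which is the one relevant to Theorem \ref{thm:l2kskeleton}. This causes no difficulty because the invariance of cellulation property recorded in the lemma of Section 3 identifies $L^2_\Q H_\ast(\Sigma(W,S))$ with $L^2_\Q H_\ast(\Sigma_{cc}(W,S))$, so the conclusion $L^2_\Q H_n(\Sigma) = 0$ for $n > k$ is valid under either interpretation of $\Sigma$.
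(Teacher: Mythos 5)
Your argument is precisely the paper's proof, just with the long exact sequence of the pair $(\Sigma,\Sigma^{(k-1)})$ and the vanishing of the two flanking terms spelled out explicitly. The only case you omit is $k=0$, where Theorem \ref{thm:l2kskeleton} does not apply (it assumes $k\geq 1$); the paper handles that case by quoting Dymara's result (Lemma \ref{prop:betti0}) that for $\Q\in\mathcal{R}$ the weighted $L^2$-homology of $\Sigma$ is concentrated in degree $0$.
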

\begin{proof}
For the case where $k=0$, this is just Proposition \ref{prop:betti0}, so suppose $k\geq 1$. By Theorem \ref{thm:l2kskeleton}, $L_\Q^2 H_\ast(\Sigma,\Sigma^{(k-1)})$ is concentrated in dimension $k$. Therefore the long exact sequence for the pair $(\Sigma,\Sigma^{(k-1)})$ implies the assertion.
\end{proof}

We shall need the following corollary in Section $7$.

\begin{corollary}\label{cor:ruins2dim}
Suppose that $\Sigma(W,S)$ is two-dimensional and that for every $t\in S, L_\mathbf{1}^2H_\ast(\Omega(S,t),\partial)$ is concentrated in dimension $1$. Then $$L_\mathbf{q}^2 H_2 (\Sigma(W,S))=0 \text{ for } \Q\leq\mathbf{1}.$$
\end{corollary}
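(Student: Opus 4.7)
The plan is to first establish the unweighted vanishing $L^2_\mathbf{1} H_2(\Sigma(W,S)) = 0$ using Remark \ref{remark:ruinconcentration} with $k=1$, and then push this vanishing down to all $\mathbf{q} \leq \mathbf{1}$ by invoking the ``Top homology'' bullet of Lemma \ref{lemma:propertiesbetti} (i.e.\ the result cited as \cite[Lemma 4.8]{wm2}). So the argument has exactly two moves, and the work amounts to verifying the hypothesis of Remark \ref{remark:ruinconcentration} at $\mathbf{q} = \mathbf{1}$ with $k=1$.

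To verify that hypothesis, I would first note that because $\Sigma(W,S)$ is two-dimensional, the nerve $L$ is one-dimensional and hence every spherical subset $U \in \mathcal{S}$ has $|U| \leq 2$. Thus $\mathcal{S}^{\geq 1}$ consists only of singletons and of two-element spherical subsets. For singletons $U = \{t\}$, the needed concentration of $L^2_\mathbf{1} H_\ast(\Omega(S,U),\partial)$ in dimension $|U| = 1$ is precisely the standing hypothesis of the corollary. For two-element subsets $U \in \mathcal{S}^{(2)}$, Proposition \ref{prop:ruinchaincpx} gives $L^2_\mathbf{1} C_m(\Omega(S,U),\partial\Omega(S,U)) = 0$ for $m < 2$; since $\Sigma$ is only two-dimensional, the entire chain complex for the ruin is concentrated in degree $2$, and so is its $L^2$-(co)homology. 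Hence the ruin concentration hypothesis of Remark \ref{remark:ruinconcentration} holds for every $U \in \mathcal{S}^{\geq 1}$.

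Applying Remark \ref{remark:ruinconcentration} with $k = 1$ and $\Q = \mathbf{1}$ then yields $L^2_\mathbf{1} H_n(\Sigma(W,S)) = 0$ for all $n > 1$; in particular $L^2_\mathbf{1} b_2(\Sigma(W,S)) = 0$. Since $\Sigma(W,S)$ is two-dimensional, the ``Top homology'' property (Lemma \ref{Pushing0}) promotes this to $L^2_\mathbf{q} b_2(\Sigma(W,S)) = 0$ for all $\mathbf{q} \leq \mathbf{1}$, which is the desired conclusion. There is no genuine obstacle here: the only thing to notice is that the top-dimension ruins ($|U|=2$) satisfy the concentration hypothesis automatically from the degree reasons in Proposition \ref{prop:ruinchaincpx}, so the hypothesis of the corollary only needs to be imposed at the level $|U| = 1$. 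The step that does the real work is the second one, namely the already-proved pushing-down lemma for the top-dimensional $L^2$-Betti number.
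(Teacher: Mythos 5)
Your proposal is correct and follows exactly the paper's own argument: the top-dimensional ruins ($|U|=2$) are concentrated in degree $2$ for degree reasons via Proposition \ref{prop:ruinchaincpx}, the singleton ruins are handled by hypothesis, Remark \ref{remark:ruinconcentration} then gives $L^2_\mathbf{1}H_2(\Sigma)=0$, and Lemma \ref{lemma:propertiesbetti} (top homology) pushes the vanishing to all $\mathbf{q}\leq\mathbf{1}$. No differences worth noting beyond your version spelling out the degree-counting step more explicitly.
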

\begin{proof} Let $T\in\mathcal{S}^{(2)}$. As $\Sigma_L$ is two-dimensional, Proposition \ref{prop:ruinchaincpx} implies that $L_\mathbf{1}^2H_\ast(\Omega(S,T),\partial)$ is concentrated in dimension $2$. This and the assumption imply that all ruins are concentrated in the correct dimension, and hence $L_\mathbf{1}^2 H_2 (\Sigma(W,S))=0$ (see Remark \ref{remark:ruinconcentration}). The statement for $\Q\leq\mathbf{1}$ follows from Lemma \ref{lemma:propertiesbetti}.
\end{proof}






\begin{example}

Suppose that $L$ is a flag triangulation of a closed orientable surface, and let $(W,S)$ be the associated right-angled Coxeter group. Following \cite[Example 17.4.3]{Davis}, we have that

$$\frac{1}{W(t)}=1-(f_0-3)t+(f_0+3-3\chi(L))t^2-(\chi(L)-1)t^3,$$

where $\chi(L)$ is the Euler characteristic of $L$ and $f_0$ is the number of vertices of $L$. The radius of convergence $\rho$ of $W(t)$ is the smallest modulus of a root of the above polynomial. 

The link of every vertex of $L$ is a $k$-gon, with $k\geq 4$. If $W_{Lk(v)}$ denotes the special subgroup corresponding the the link of a vertex $v$, and $Lk(v)$ has $k$ vertices, then by an easy computation we have that

$$\rho_{Lk(v)}=\frac{(k-2)-\sqrt{k^2-4k}}{2}.$$

If $v_0$ is the vertex of $L$ whose link has the maximal number of vertices, then Corollary \ref{cor:vanishingstars} implies that $L_q^2 H_\ast(\Sigma(W,S))=0$ is concentrated in dimension $1$ whenever $\rho<q<\rho_{Lk(v_0)}$. 

The main point is that $\rho$ is explicitly computable. For example, it is still open if $L_q^2 b_\ast(\Sigma(W,S))$ is concentrated in degree $2$ for $q=1$ (see \cite[Conjecture 11.5.1]{do}), but Corollary \ref{cor:vanishingstars} lets us conclude that $L_q^2 b_2(\Sigma(W,S))=0$ for $q<\rho_{Lk(v_0)}$.

\end{example}

\section{Further computations}

\subsection{The Strong Atiyah Conjecture and weighted $L^2$-Betti numbers} In this section, we will assume that the nerve $L$ of the Coxeter system $(W,S)$ is a graph (so that $\Sigma(W,S)$ is two-dimensional). Furthermore, we need only consider the Davis complex with the Coxeter cellulation. For convenience, we will label the edge $[s_is_j]$ of the nerve with $m_{ij}$. This allows us to recover the Coxeter system by reading the presentation off of the nerve.

\begin{Aconj} Let $G$ be a discrete group. For any proper cocompact $G$-CW complex $Y$
	\[
		\lcm G \cdot L^2_\mathbf{1}b_k(Y;G) \in \mathbb{Z},
	\]
	where $\lcm G$ denotes the least common multiple of the orders of finite subgroups of $G$.
\end{Aconj}

In \cite{ks}, the second author proved the Strong Atiyah Conjecture for groups that are virtually cocompact special. Restricting to the Coxeter group setting, we have the following:

\begin{theorem}[{\cite[Corollary 6.5]{ks}}]\label{thm:atiyah} Let $W$ be a Coxeter group without Euclidean triangle special subgroups. If $W$ is a finite index subgroup of $W'$, then the Strong Atiyah Conjecture holds for $W'$.
\end{theorem}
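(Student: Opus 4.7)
The plan is twofold: first establish the Strong Atiyah Conjecture directly for $W$ by realizing it as a virtually cocompact special group, then bootstrap the conclusion from $W$ up to the finite overgroup $W'$ via a dimension-scaling argument.

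For the first step, I would appeal to the Niblo--Reeves cubulation, which furnishes every Coxeter system $(W,S)$ with a proper action on a CAT(0) cube complex $X_W$ whose hyperplanes correspond to the reflection walls in the Tits cone. The action is automatically proper, but cocompactness is exactly what the hypothesis on Euclidean triangle special subgroups buys: a Euclidean triangle Coxeter subgroup $W_T \leq W$ produces an infinite family of walls whose intersection pattern generates unboundedly many cubes in a fundamental domain, obstructing cocompactness, while the absence of such $W_T$ ensures the $W$-action on $X_W$ is cocompact. One then invokes the Haglund--Wise criterion to conclude that the induced hyperplane structure on $W\backslash X_W$ is clean, so that after passing to a finite-index torsion-free subgroup $W_0 \leq W$, the quotient $W_0 \backslash X_W$ is a special cube complex. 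Thus $W$ is virtually cocompact special, and the Strong Atiyah Conjecture for $W$ follows from the main theorem of \cite{ks}, which establishes SAC for any virtually cocompact special group (building on Agol's RFRS technology and Linnell's approximation framework).

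For the second step, I would transfer SAC from $W$ to $W'$. If $Y$ is any proper cocompact $W'$-CW complex, then restricting the action makes $Y$ a proper cocompact $W$-CW complex, and the standard restriction formula gives
\[
L^2_{\mathbf{1}}b_k(Y;W) \;=\; [W':W]\cdot L^2_{\mathbf{1}}b_k(Y;W').
\]
Since SAC for $W$ gives $\lcm W \cdot L^2_{\mathbf{1}}b_k(Y;W) \in \mathbb{Z}$, one obtains that $[W':W]\cdot\lcm W\cdot L^2_{\mathbf{1}}b_k(Y;W') \in \mathbb{Z}$. A short argument, using that every finite subgroup of a Coxeter group is conjugate into a spherical special subgroup and that the finite subgroups of $W'$ meet $W$ in subgroups of uniformly bounded index dividing $[W':W]$, shows that $\lcm W'$ divides $[W':W]\cdot \lcm W$, which upgrades the above integrality to $\lcm W' \cdot L^2_{\mathbf{1}}b_k(Y;W') \in \mathbb{Z}$.

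The main obstacle is Step 1: pinpointing the Euclidean-triangle hypothesis as precisely the obstruction to cocompact cubulation, and verifying the cleanness of the hyperplane structure needed to apply Haglund--Wise. Step 2 is essentially a bookkeeping exercise with the standard restriction behavior of von Neumann dimension under finite-index inclusions.
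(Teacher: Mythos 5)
This statement is imported verbatim from \cite[Corollary 6.5]{ks}; the paper gives no proof, so the comparison is with the argument there. Your Step 1 is essentially the right opening move and matches that argument in outline: Niblo--Reeves gives a proper action on a CAT(0) cube complex, the absence of Euclidean triangle special subgroups is exactly the hypothesis under which this action is cocompact (this is a theorem of Caprace--M\"uhlherr, not something one verifies by hand via ``unboundedly many cubes in a fundamental domain''), and Haglund--Wise's theorem that Coxeter groups are virtually special (which is a substantial result, not merely a ``cleanness'' check) makes $W$ virtually cocompact special, so the main theorem of \cite{ks} applies to $W$.

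Step 2, however, contains a genuine error. From $\lcm W\cdot[W':W]\cdot L^2_{\mathbf{1}}b_k(Y;W')\in\mathbb{Z}$ and the (plausible) divisibility $\lcm W'\mid [W':W]\cdot\lcm W$, you cannot conclude $\lcm W'\cdot L^2_{\mathbf{1}}b_k(Y;W')\in\mathbb{Z}$: knowing $Nx\in\mathbb{Z}$ for a proper multiple $N$ of $\lcm W'$ says nothing about $\lcm W'\cdot x$ (take $\lcm W'=2$, $N=4$, $x=1/4$). This is precisely why the Strong Atiyah Conjecture is \emph{not} known to pass to finite overgroups in general, and why the finite-extension step is the hard part of the subject rather than bookkeeping. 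The actual route is much simpler at the level of logic and much harder at the level of input: ``virtually cocompact special'' is a commensurability-invariant condition, so the cocompact special finite-index subgroup of $W$ is also finite-index in $W'$, and the main theorem of \cite{ks} applies \emph{directly} to $W'$. The work of absorbing the finite quotient is done inside that theorem via the Linnell--Schick finite-extension machinery (using that special groups embed in right-angled Artin groups, which are residually torsion-free nilpotent and satisfy the cohomological conditions Linnell--Schick require), not by rescaling von Neumann dimensions. As written, your proof establishes SAC for $W$ but not for $W'$.
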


We will exploit Theorem \ref{thm:atiyah} to make computations of $L^2$-Betti numbers for many examples of Coxeter groups. For these examples, the strategy will be to show that the $L^2$-cohomology of every $(S,t)$-ruin is concentrated in dimension $1$. We can then apply Corollary \ref{cor:ruins2dim} to compute the $L^2$-cohomology of the Davis complex.

For every $t\in S$, the $(S,t)$-ruin has the property that $\Omega(S,t)=\Omega(St(t),t)$. Therefore, we can compute the $L^2$-Betti numbers $L_\mathbf{1}^2b_\ast(\Omega(S,t),\partial)$ using the subgroup $W_{St(t)}$. 

Suppose the labeled $\St(t)$ has an automorphism group $G$ that is vertex transitive and edge transitive. Furthermore, suppose $G$ does not flip edges, so that we can think of the edge stabilizer as a subgroup of the vertex stabilizer. 

Since $G$ acts by labeled automorphism on $\St(t)$, it acts on $W_{St(t)}$. We can form the semidirect product $W_{St(t)} \rtimes G$. Obviously, the semidirect product also acts on the Davis complex $\Sigma(W_{\St(t)})$. Furthermore, the following lemma of Davis tells us that the stabilizers of Coxeter cells under this action are only the obvious subgroups.

\begin{lemma}[{\cite[Proposition 9.1.9]{Davis}}]\label{prop:stabilizers}
For $U\in\mathcal{S}(St(t))$ and $w\in W_{St(t)}$, let $wc_U$ denote the Coxeter cell of type $U$, and let $G_U$ denote the stabilizer of the simplex $U$ in $\St(t)$ under the $G$-action. The isotropy subgroup of $W_{St(t)} \rtimes G$ fixing $wc_U$ is the subgroup $wW_Uw^{-1} \rtimes G_U$.
\end{lemma}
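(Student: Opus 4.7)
The plan is to reduce to computing the stabilizer of the fundamental Coxeter cell $c_U$ and then transport to the general cell $wc_U$ by equivariance via the element $(w,1)$. This is the standard strategy for computing isotropy groups under a semidirect product action, and it exploits the fact that $W_{St(t)}$ acts simply transitively on the set of $W_{St(t)}$-translates of a cell of type $U$, while $G$ permutes cells according to their type.

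First I would write down how $(v,g)\in W_{St(t)}\rtimes G$ moves a cell: the rule is $(v,g)\cdot (wc_U) = vg(w)\, c_{g(U)}$, where $g(w)$ denotes the image of $w$ under the automorphism $g$ acting on $W_{St(t)}$ by permuting the generators of $St(t)$. Thus $(v,g)$ stabilizes $wc_U$ precisely when $g(U)=U$ (so $g\in G_U$) and $vg(w)W_U=wW_U$ (equivalently $v\in wW_Ug(w)^{-1}$). Specializing to $w=e$ yields $\mathrm{Stab}(c_U)=W_U\rtimes G_U$; this is a bona fide subgroup because $G_U$ preserves $U$ setwise and hence sends $W_U$ to itself.

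For the general cell, equivariance gives $\mathrm{Stab}(wc_U)=(w,1)\cdot\mathrm{Stab}(c_U)\cdot (w,1)^{-1}$. A direct semidirect-product calculation yields $(w,1)(y,g)(w,1)^{-1} = \bigl(wyg(w^{-1}),\,g\bigr)$ for $(y,g)\in W_U\rtimes G_U$, and one checks that the map $(y,g)\mapsto \bigl(wyg(w^{-1}),g\bigr)$ is an injective group homomorphism whose image is exactly the subset of $W_{St(t)}\rtimes G$ satisfying the stabilizer conditions derived in the previous paragraph. Identifying this conjugate subgroup with $wW_Uw^{-1}\rtimes G_U$ via the conjugation isomorphism $W_U\cong wW_Uw^{-1}$ completes the proof.

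The main point requiring care is the last identification: the semidirect product $wW_Uw^{-1}\rtimes G_U$ must be formed using the $G_U$-action on $wW_Uw^{-1}$ transported through the conjugation isomorphism with $W_U$, not the literal $G$-action on $W_{St(t)}$ restricted to $wW_Uw^{-1}$ (which need not even preserve $wW_Uw^{-1}$ setwise). Once this convention is made explicit, the verification that $\phi$ is a homomorphism reduces to the cocycle identity $g_1(w^{-1})\cdot g_1(w)=e$ together with $g_1(W_U)=W_U$, and the remainder is routine bookkeeping with cosets using Tits' solution to the word problem.
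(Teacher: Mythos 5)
The paper offers no proof of this lemma, simply citing \cite[Proposition 9.1.9]{Davis}; your argument is correct and is essentially the standard one: derive the stabilizer of the fundamental cell from the formula $(v,g)\cdot wc_U = v\,g(w)\,c_{g(U)}$, then conjugate by $(w,1)$, with the appropriate caveat (which you state) about how $wW_Uw^{-1}\rtimes G_U$ is to be read. The only blemish is the claim in your opening paragraph that $W_{St(t)}$ acts ``simply transitively'' on the translates of a cell of type $U$ (it acts transitively with stabilizer $wW_Uw^{-1}$), but this is not used in the actual argument.
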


Let $G_t$ denote the stabilizer of the vertex $t$ in $\St(t)$ under the $G$-action. Then the group $W_{St(t)} \rtimes G_t$ acts on the $(St(t),t)$-ruin. By multiplicativity of $L^2$-Betti numbers we have the following.

\begin{lemma}\label{lemma:multiplicative}
 $$L_\mathbf{1}^2b_\ast(\Omega(St(t),t),\partial, W_{St(t)} \rtimes G_t) = |G_t|L^2_\mathbf{1}b_\ast(\Omega(St(t),t),\partial,W_{St(t)}).$$
\end{lemma}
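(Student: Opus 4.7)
The plan is to invoke the standard multiplicativity of $L^2$-Betti numbers under passage between a finite-index pair of groups, applied to the inclusion $W_{\St(t)} \hookrightarrow W_{\St(t)} \rtimes G_t$ acting on the ruin $(\Omega(\St(t),t),\partial\Omega(\St(t),t))$. There are really three items to verify: (i) the semidirect product genuinely acts on the ruin, (ii) the action is proper and cocompact so that $L^2$-Betti numbers make sense in the sense of Section \ref{sec:altbetti}, and (iii) the index $[W_{\St(t)}\rtimes G_t : W_{\St(t)}]$ equals $|G_t|$, which combined with multiplicativity produces the factor $|G_t|$ in the stated formula.

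For (i), the assumption that $G$ acts on $\St(t)$ by \emph{labeled} automorphisms means the induced action on $W_{\St(t)}$ preserves the Coxeter relations, so the semidirect product $W_{\St(t)}\rtimes G_t$ is well-defined and acts on $\Sigma(W_{\St(t)})$; restricting to $G_t$ (which fixes the vertex $t$) sends a Coxeter cell of type $U\ni t$ to one of type $(G_t\cdot U)\ni t$, so the subcomplexes $\Omega(\St(t),t)$ and $\partial\Omega(\St(t),t)$ are each invariant. For (ii), properness is immediate from Lemma \ref{prop:stabilizers}: every cell stabilizer is of the form $wW_Uw^{-1}\rtimes G_U$, which is finite because $U$ is spherical and $G$ is finite. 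Cocompactness is inherited from the cocompact $W_{\St(t)}$-action, since $G_t$ is finite.

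For (iii), since $G_t$ is finite and meets $W_{\St(t)}$ trivially inside the semidirect product (the $G_t$-factor consists of labeled graph automorphisms, not Coxeter elements), the cosets of $W_{\St(t)}$ are in bijection with $G_t$, giving index exactly $|G_t|$. With all this in place, one applies the standard multiplicativity theorem for $L^2$-Betti numbers under a finite-index inclusion of groups (\cite[Theorem 6.54]{Luckbook}) to the pair $W_{\St(t)}\leq W_{\St(t)}\rtimes G_t$ acting on $(\Omega(\St(t),t),\partial\Omega(\St(t),t))$; this yields the claimed identity with the factor $|G_t|$.

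The only real obstacle is step (i): confirming that the $G_t$-action is compatible with the ruin structure, in particular that it genuinely preserves $\partial\Omega(\St(t),t)$ as well as $\Omega(\St(t),t)$. This amounts to the observation that membership in $\partial\Omega(\St(t),t)$ is determined by the \emph{type} of a cell (namely, whether that type contains $t$), and $G_t$ preserves this type data because it fixes $t$. Steps (ii) and (iii) are then routine bookkeeping, and invocation of the multiplicativity formula is immediate.
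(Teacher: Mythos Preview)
Your approach is exactly the paper's: the paper states the lemma immediately after the sentence ``By multiplicativity of $L^2$-Betti numbers we have the following,'' offering no further argument. Your proposal simply spells out the hypotheses needed for that invocation (well-definedness of the action on the ruin, properness and cocompactness, and the index computation), so there is nothing to add.
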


The following simple lemma will allow us to use the Strong Atiyah Conjecture to make computations.

\begin{lemma}\label{lcm12}
Suppose that $X$ is a $G$-complex with associated $L^2$-chain complex $$0 \rightarrow L^2C_n(X) \rightarrow L^2C_{n-1}(X) \rightarrow \dots \rightarrow L^2C_1(X) \rightarrow L^2C_0(X) \rightarrow 0.$$ If $\partial_i: L^2C_i(X) \rightarrow L^2C_{i-1}(X)$ or $\partial_i: L^2C_i(X) \rightarrow L^2C_{i-1}(X)$ is nonzero, and $\dim_G(L^2C_i(X)) = \frac{1}{\lcm(G)}$, and $G$ satisfies the Strong Atiyah Conjecture, then $L^2H_i(X,G) = 0$.
\end{lemma}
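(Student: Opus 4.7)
The plan is to use the dimension bound from the Strong Atiyah Conjecture to pin down $L^2b_i(X;G)$ to one of only two possible values, and then rule out the nonzero value by a dimension count that forces both incoming and outgoing boundary maps to vanish.

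First I would observe that $L^2b_i(X;G)$ is bounded above by $\dim_G \ker\partial_i$, which in turn is bounded above by $\dim_G L^2C_i(X) = 1/\lcm(G)$. By the Strong Atiyah Conjecture applied to $X$, the number $\lcm(G)\cdot L^2b_i(X;G)$ is a nonnegative integer, so $L^2b_i(X;G)$ is a nonnegative integer multiple of $1/\lcm(G)$. Combined with the upper bound, the only possibilities are
\[
L^2b_i(X;G)\in\bigl\{0,\tfrac{1}{\lcm(G)}\bigr\}.
\]

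Next I would show that the nonzero possibility cannot occur under the hypothesis. Suppose for contradiction that $L^2b_i(X;G)=1/\lcm(G)$. Using the Hodge-type decomposition (additivity of von Neumann dimension),
\[
\dim_G \ker\partial_i \;=\; L^2b_i(X;G) + \dim_G \overline{\im\partial_{i+1}} \;\geq\; \tfrac{1}{\lcm(G)} \;=\; \dim_G L^2C_i(X),
\]
so $\ker\partial_i = L^2C_i(X)$, which forces $\partial_i=0$. Moreover the inequality above must be an equality, so $\dim_G \overline{\im\partial_{i+1}}=0$, hence $\overline{\im\partial_{i+1}}=0$ as a Hilbert $\mathcal{N}_G$-module, which forces $\partial_{i+1}=0$. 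Contrapositively, if either $\partial_i$ or $\partial_{i+1}$ is nonzero, then $L^2b_i(X;G)=0$, i.e.\ $L^2H_i(X;G)=0$.

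The argument is essentially a short pigeonhole on the dimension lattice, so there is no real obstacle; the only subtlety is being careful about the typo in the statement (the hypothesis should read ``$\partial_i$ or $\partial_{i+1}$ is nonzero'') and ensuring that the Strong Atiyah integrality is applied to $L^2b_i(X;G)$ itself rather than to the auxiliary dimensions $\dim_G \ker\partial_i$ or $\dim_G \overline{\im\partial_{i+1}}$, which are not a priori known to lie in $\tfrac{1}{\lcm(G)}\mathbb{Z}$.
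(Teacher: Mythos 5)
Your proposal is correct and follows essentially the same route as the paper: both use the Strong Atiyah integrality to restrict $L^2b_i(X;G)$ to $\{0,\tfrac{1}{\lcm(G)}\}$ and then observe that a nonzero $\partial_i$ or $\partial_{i+1}$ forces $\dim_G L^2H_i(X;G)<\dim_G L^2C_i(X)=\tfrac{1}{\lcm(G)}$. You merely make explicit (via additivity of the von Neumann dimension and faithfulness) the paper's one-line claim that nonvanishing of a boundary map makes the homology strictly smaller, and you correctly flag the typo that the hypothesis should read ``$\partial_i$ or $\partial_{i+1}$''.
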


\begin{proof}
Since $G$ satisfies the Strong Atiyah Conjecture, we know that if $L^2b_i(X,G) < \frac{1}{\lcm(G)}$, then $L^2b_i(X,G) = 0$. Since one of the boundary maps is nonzero, $L^2H_i(X,G)$ is strictly smaller than $L^2C_i(X)$.

By assumption, $\dim_G(L^2C_i(X)) = \frac{1}{\lcm(G)}$, and this implies $L^2H_i(X,G) = 0$.
\end{proof}

\begin{lemma}\label{vanishing}
Suppose that $\St(t)$ is labeled with $m$'s and let $\deg(t)$ denote the degree of the vertex $t$ in $\St(t)$. Furthermore, suppose that $G_t$ acts transitively on the edges emanating from $t$. If $m$ is a multiple of $\deg(t)$, then $L_\mathbf{1}^2H_\ast(\Omega(S,t),\partial)$ is concentrated in dimension $1$.
\end{lemma}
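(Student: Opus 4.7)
The plan is to apply Lemma \ref{lcm12} to the $L^2$-chain complex of the ruin $\Omega(\St(t),t)$, viewed as a CW-complex acted on by the enlarged group $G := W_{\St(t)} \rtimes G_t$, working in top dimension $i=2$. If I succeed over $G$, the passage from $W$ to $W_{\St(t)}$ justified by the discussion preceding Lemma \ref{lemma:multiplicative}, combined with Lemma \ref{lemma:multiplicative} itself, yields the desired vanishing over $W$.

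First I would write down the relevant chain complex. Since $\Sigma(W,S)$ is two-dimensional and, by Proposition \ref{prop:ruinchaincpx}, the relative chains vanish below dimension one, we have
\[
0 \to L^2_{\mathbf{1}}C_2(\Omega(\St(t),t),\partial) \xrightarrow{\partial_2} L^2_{\mathbf{1}}C_1(\Omega(\St(t),t),\partial) \to 0,
\]
so it suffices to show that $L^2_{\mathbf{1}}H_2 = 0$. The 1-cells of type $\{t\}$ form a single $G$-orbit (as $G_t$ fixes $t$), and by edge-transitivity of $G_t$ the 2-cells of type $\{t,s\}$ also form a single $G$-orbit. Lemma \ref{prop:stabilizers} gives the stabilizer of a 2-cell as $W_{\{t,s\}} \rtimes (G_t)_s$, which by orbit–stabilizer has order $2m|G_t|/\deg(t)$. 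Therefore
\[
\dim_{\mathcal{N}(G)} L^2_{\mathbf{1}}C_2 = \frac{\deg(t)}{2m|G_t|}.
\]

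Next I would identify this dimension as $1/\lcm(G)$. Because the Davis complex of $W_{\St(t)}$ is CAT(0), every finite subgroup of $G$ is conjugate into some cell stabilizer $W_U \rtimes (G_t)_U$ with $U$ spherical in $\St(t)$. Since $\St(t)$ is a tree, the relevant $U$'s have cardinality at most two, so the maximal-order candidates are $W_{\{t\}} \rtimes G_t$ (order $2|G_t|$) and $W_{\{t,s\}} \rtimes (G_t)_s$ (order $2m|G_t|/\deg(t)$). The hypothesis $\deg(t) \mid m$ makes the second order an integer multiple of the first, so $\lcm(G) = 2m|G_t|/\deg(t)$, matching $\dim_{\mathcal{N}(G)} L^2_{\mathbf{1}}C_2$.

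Two things remain in order to invoke Lemma \ref{lcm12}. For the Strong Atiyah Conjecture for $G$, note that $W_{\St(t)}$ is a finite-index subgroup of $G$ and, because $\St(t)$ is a tree, has no Euclidean-triangle special subgroups; Theorem \ref{thm:atiyah} then applies. For $\partial_2 \neq 0$, under the identification of Proposition \ref{prop:ruinchaincpx} this boundary is a sum of inclusions $H_{\{t,s\}} \hookrightarrow H_{\{t\}}$, each of which is nonzero because the idempotent $h_{\{t,s\}}$ is nonzero. Lemma \ref{lcm12} then gives $L^2_{\mathbf{1}}H_2(\Omega(\St(t),t),\partial;G) = 0$, and Lemma \ref{lemma:multiplicative} descends this to $W_{\St(t)}$ and hence to $W$. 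The subtlest step, and the one where the divisibility hypothesis $\deg(t) \mid m$ is essential, is the $\lcm$ computation in the third paragraph.
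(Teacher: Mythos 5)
Your argument is essentially the paper's proof: the same computation $\dim_{\mathcal{N}(G)}L^2_{\mathbf{1}}C_2=\deg(t)/(2m|G_t|)$ via orbit--stabilizer, the same identification of this quantity with $1/\lcm(G)$ using the hypothesis $\deg(t)\mid m$, and the same appeal to Lemmas \ref{lcm12} and \ref{lemma:multiplicative} to descend from $W_{\St(t)}\rtimes G_t$ back to $W_{\St(t)}$ and $W$. You are in fact more explicit than the paper on two points it leaves tacit: the nonvanishing of $\partial_2$ and the verification of the Strong Atiyah hypothesis for $G=W_{\St(t)}\rtimes G_t$.

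The one genuine error is your repeated assertion that $\St(t)$ is a tree. The lemma assumes no such thing, and in the application to Theorem \ref{thm:AtiyahKn} one has $\St(t)=K_n$, which contains many $3$-cycles. Where you use the tree claim to restrict to spherical $U$ with $|U|\le 2$, the conclusion survives for a different reason: the standing assumption of this section is that the nerve is a graph, so there are no spherical subsets of cardinality $3$ even when the $1$-skeleton has $3$-cycles. Where you use it to verify the ``no Euclidean triangle special subgroups'' hypothesis of Theorem \ref{thm:atiyah}, the justification genuinely fails: a $3$-cycle in $\St(t)$ with uniform label $m=3$ gives a $(3,3,3)$ Euclidean triangle special subgroup of $W_{\St(t)}$ (this occurs, for instance, for $K_4$ with $m=3$). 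The paper's proof does not address this hypothesis at all, so you are not diverging from its route, but your stated reason is wrong and the point is a real one. Relatedly, $3$-cycles in $\St(t)$ produce finite subgroups $W_{\{s,s'\}}\rtimes (G_t)_{\{s,s'\}}$ of $G$ coming from edges \emph{not} containing $t$; your list of candidates for $\lcm(G)$ (like the paper's) omits these, and since Lemma \ref{lcm12} needs the exact equality $\dim_G L^2C_2=1/\lcm(G)$, one should check that their orders divide $2m|G_t|/\deg(t)$.
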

\begin{proof}
By Proposition \ref{prop:ruinchaincpx}, we have that $$\dim_{W_{St(t)}} L_\mathbf{1}^2 C_2(\Omega(S,t),\partial,W_{St(t)})=\frac{\deg(t)}{2m}.$$

If $\frac{\deg(t)}{2m} = \frac{|G_t|}{\lcm(W_{St(t)} \rtimes G_t)}$, then $L_\mathbf{1}^2b_2(\Omega(St(t),t),\partial, W_{St(t)} \rtimes G_t)=0$ by Lemma \ref{lcm12}, and hence Lemma \ref{lemma:multiplicative} implies $$L_\mathbf{1}^2b_2(\Omega(S,t),\partial)=L^2_\mathbf{1}b_2(\Omega(St(t),t),\partial,W_{St(t)})=0.$$

Let $E_T$ denote the stabilizer of the submodule $H_T$ of the chain complex $L_\mathbf{1}^2 C_2(\Omega(St(t),t),\partial,W_{St(t)})$ under the $W_{St(t)} \rtimes G_t$-action. Since $G_t$ acts transitively on the edges adjacent to $t$, it acts transitively on the submodules $H_T$, and hence $|G_t|/|E_{T}|=\deg(t)$. If $m$ is a multiple of $\deg(t)$, then $$\lcm (2|G_t|,2m|E_T|)=2|G_t|\frac{m}{\deg(t)}.$$

It follows that $\lcm(W_{St(t)} \rtimes G_t)=2|G_t|\frac{m}{\deg(t)}$ and this completes the proof.

\end{proof}

\begin{theorem}\label{thm:AtiyahKn}
Let $K_n$ denote the complete graph on $n$-vertices. Suppose that the nerve $L=K_n$ is labeled by uniform labels $m\geq n-1$. Then $$L_\mathbf{q}^2 H_2 (\Sigma(W,S))=0 \text{ for } \Q\leq\mathbf{1}.$$
\end{theorem}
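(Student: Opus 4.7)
The plan is to apply Corollary \ref{cor:ruins2dim}. If $m=2$, then all generators of $W$ commute and $W\cong(\mathbb{Z}/2)^n$ is finite, so $\Sigma(W,S)$ is contractible and the claim is trivial. We therefore assume $m\geq 3$, so that no $3$-element subset of $S$ is spherical, the nerve is $1$-dimensional, and $\Sigma(W,S)$ is $2$-dimensional. By the symmetry of $K_n$ under uniform labels all vertices are equivalent, so it suffices to fix a vertex $t$ and show that $L^2_\mathbf{1}H_\ast(\Omega(S,t),\partial)$ is concentrated in dimension $1$. Since $\St(t)=S$, Proposition \ref{prop:ruinchaincpx} presents the ruin chain complex as
\[
0 \to \bigoplus_{s\in Lk(t)} H_{\{t,s\}} \xrightarrow{\partial} H_t \to 0,
\]
so the required concentration amounts to injectivity of $\partial$.

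The von Neumann dimensions $\dim_W L^2 C_2 = (n-1)/(2m)$ and $\dim_W L^2 C_1 = 1/2$ show that the hypothesis $m\geq n-1$ is precisely the dimension inequality $\dim L^2 C_2 \leq \dim L^2 C_1$, which is necessary for injectivity. To upgrade this to genuine injectivity we follow the strategy of Lemma \ref{vanishing}: choose a labelled-automorphism subgroup $G\subseteq S_{n-1}$ of $\St(t)=K_n$ that fixes $t$ and acts transitively on the edges emanating from $t$. This yields an action of $W\rtimes G$ on the ruin; an orbit-stabilizer computation gives $\dim_{W\rtimes G}L^2 C_2 = (n-1)/(2m|G|)$, and Lemma \ref{prop:stabilizers} enumerates the isotropies of Coxeter cells and thus bounds the orders of the finite subgroups of $W\rtimes G$. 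The content of the hypothesis $m\geq n-1$ is that it calibrates these quantities so that the inequality $\dim_{W\rtimes G}L^2 C_2 \leq 1/\lcm(W\rtimes G)$ required by Lemma \ref{lcm12} holds.

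By Theorem \ref{thm:atiyah} the Strong Atiyah Conjecture applies to $W\rtimes G$ (which contains $W$ as a finite-index subgroup) provided $W$ has no Euclidean triangle special subgroups; under uniform labels this excludes only $m=3$. The boundary $\partial$ is manifestly nonzero, since each summand $H_{\{t,s\}}$ embeds injectively into $H_t$, so Lemma \ref{lcm12} then forces $L^2_\mathbf{1}H_2(\Omega(S,t),\partial)=0$. The exceptional case $m=3$ (which requires $n\in\{3,4\}$) is handled separately: when $n=3$ the group is the affine Weyl group $\tilde A_2$, which is amenable and for which the Strong Atiyah Conjecture is classical; when $n=4$ the weak exact sequence of ruins preceding the statement reduces the computation to the already-verified $K_3$ case.

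The principal obstacle is the careful choice of $G$. The full group $S_{n-1}$ acting on $Lk(t)$ flips edges of $K_n$ that are not incident to $t$; by Lemma \ref{prop:stabilizers} this creates finite subgroups of order $4m(n-3)!$ in $W\rtimes S_{n-1}$, potentially inflating $\lcm(W\rtimes S_{n-1})$ beyond the threshold needed by Lemma \ref{lcm12}. Balancing transitivity on edges at $t$ against this stabilizer inflation, via either a clever smaller subgroup or a refinement of the Strong Atiyah bookkeeping, is where the precise content of $m\geq n-1$ (in place of the stronger $(n-1)\mid m$ required by Lemma \ref{vanishing}) enters. Once injectivity is established for every $t$, Corollary \ref{cor:ruins2dim} delivers $L^2_\mathbf{q}H_2(\Sigma(W,S))=0$ for all $\mathbf{q}\leq\mathbf{1}$.
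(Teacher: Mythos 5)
Your setup coincides with the paper's --- reduce to concentration of each $(S,t)$-ruin in dimension $1$, feed this into Corollary \ref{cor:ruins2dim}, and get the concentration from the Strong Atiyah Conjecture via Lemmas \ref{lcm12} and \ref{vanishing} --- but you stop exactly where the real work is, and the route you sketch for closing the gap cannot succeed. You hope that the hypothesis $m\geq n-1$ can be made to ``calibrate'' the identity $\dim_{W\rtimes G_t}L^2C_2=1/\lcm(W_{\St(t)}\rtimes G_t)$ for some clever choice of $G_t$. It cannot: by Lemma \ref{prop:stabilizers} the isotropy group of a Coxeter cell of type $\{t\}$ is $wW_tw^{-1}\rtimes G_t$, so $2|G_t|$ divides $\lcm(W_{\St(t)}\rtimes G_t)$, whereas the equality demanded by Lemma \ref{lcm12} is $\lcm(W_{\St(t)}\rtimes G_t)=2m|G_t|/(n-1)$; these are compatible only when $(n-1)\mid m$. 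No refinement of the bookkeeping and no smaller edge-transitive subgroup avoids this, which is precisely why Lemma \ref{vanishing} carries the divisibility hypothesis rather than an inequality.

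The missing idea is the paper's opening reduction: for $m>n-1$, the labeled graph $K_n$ is a full subcomplex of $K_{m+1}$ carrying the same uniform label $m$, so $W_{K_n}$ is a special subgroup of $W_{K_{m+1}}$, and vanishing of top-dimensional $L^2$-homology for the larger two-dimensional Davis complex implies it for the smaller one. This reduces everything to $m=n-1=\deg(t)$, where Lemma \ref{vanishing} applies verbatim with $G_t=A_{n-1}$ acting transitively on the edges at $t$. Your remarks on edge flips and on the Euclidean $(3,3,3)$ triangles when $m=3$ are legitimate points of care (the latter genuinely obstructs a literal application of Theorem \ref{thm:atiyah} to $W_{K_4}$ with label $3$ and merits the separate treatment you gesture at), but your proposed fix for $n=4$ via the weak exact sequence of ruins is not an argument: that sequence relates $\Sigma(S-s)$, $\Sigma(S)$ and $\Omega(S,s)$ for a single group and does not reduce the $K_4$ ruin to the $K_3$ case. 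As written, the proof is incomplete at its central step.
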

\begin{proof}
 It suffices to consider the case where $m = n-1$; if $m > n-1$ then the group $W_{K_n}$ is a Coxeter subgroup of $W_{K_{m+1}}$, and vanishing for the larger group implies vanishing of the subgroup in the top dimension. For every $t\in S$, the alternating group $A_{n-1}$ acts on $St(t)$ and the action is transitive on the edges adjacent to $t$. Therefore, by Lemma \ref{vanishing}, we obtain that the $L^2$-cohomology of every $(S,t)$-ruin is concentrated in dimension $1$. The theorem now follows by Corollary \ref{cor:ruins2dim}.



\end{proof}

\begin{theorem}\label{thm:Atiyahcomputation}
Suppose the labeled nerve $L$ with uniform labels $m$ is a graph with no $3$-cycles. If $m$ is a multiple of $\lcm_{s\in S}\deg(s)$, then $$L_\mathbf{q}^2 H_2 (\Sigma(W,S))=0 \text{ for } \Q\leq\mathbf{1}.$$
\end{theorem}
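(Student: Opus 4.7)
The plan is to imitate the argument of Theorem \ref{thm:AtiyahKn} and reduce everything to a vertex-by-vertex application of Lemma \ref{vanishing}. By Corollary \ref{cor:ruins2dim}, it suffices to verify that for every $t \in S$ the unweighted homology $L_\mathbf{1}^2 H_\ast(\Omega(S,t),\partial)$ is concentrated in dimension $1$; the statement for $\mathbf{q}\leq\mathbf{1}$ then follows from the same corollary.

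The key geometric observation is that, since $L$ has no $3$-cycles, no two neighbors of $t$ can be joined by an edge of $L$. Hence $\St(t)$ is a star graph: its vertex set consists of $t$ together with its $d := \deg(t)$ neighbors, and its only edges are the $d$ edges of $L$ incident to $t$. The symmetric group $G_t := S_d$ acts on this labeled star graph by permuting the $d$ leaves, fixing $t$, and preserving the uniform label $m$. In particular $G_t$ acts transitively on the edges emanating from $t$, and by hypothesis $m$ is a multiple of $\lcm_{s\in S}\deg(s)$, hence a multiple of $d = \deg(t)$.

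With these ingredients in hand, I would apply Lemma \ref{vanishing} at each $t \in S$. This requires the Strong Atiyah Conjecture for the semidirect product $W_{\St(t)}\rtimes G_t$, which I would obtain as follows: since $\St(t)$ has no $3$-cycles, $W_{\St(t)}$ has no triangle special subgroups at all, hence none that are Euclidean, so Theorem \ref{thm:atiyah} yields Strong Atiyah for $W_{\St(t)}$, and the result propagates to the finite overgroup $W_{\St(t)}\rtimes G_t$ (which contains $W_{\St(t)}$ with finite index $|G_t|$). Lemma \ref{vanishing} then gives the desired concentration of each $(S,t)$-ruin in dimension $1$, and Corollary \ref{cor:ruins2dim} completes the proof.

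There is no real obstacle in this argument; once the no-$3$-cycles hypothesis is translated into the star-graph picture of $\St(t)$, the rest is a direct application of Lemma \ref{vanishing}. The only minor point worth checking is that the permutation action of $S_d$ on the leaves of $\St(t)$ does not flip the edges incident to $t$; this is automatic since each such automorphism fixes $t$ and sends a leaf $s$ to a (possibly different) leaf $\sigma(s) \neq t$, so $\{t,s\}$ maps to $\{t,\sigma(s)\}$ rather than being flipped, and the setup preceding Lemma \ref{vanishing} is legitimately available.
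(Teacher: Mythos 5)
Your proposal is correct and follows essentially the same route as the paper: translate the no-$3$-cycles hypothesis into the statement that $\St(t)$ is a star on $\deg(t)$ leaves, find a vertex- and edge-transitive label-preserving automorphism group fixing $t$, and then apply Lemma \ref{vanishing} followed by Corollary \ref{cor:ruins2dim}. The only differences are cosmetic --- you use the full symmetric group on the leaves where the paper uses the alternating group $A_{\deg(t)}$ (either works in the numerology of Lemma \ref{vanishing}, and yours is in fact safer when $\deg(t)=2$), and you make explicit the Strong Atiyah input from Theorem \ref{thm:atiyah} that the paper leaves implicit.
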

\begin{proof}
If $L$ has no $3$-cycles, then for every $t\in S$, $Lk(t)$ is a full subcomplex of $L$. Since $Lk(t)$ consists of $\deg(t)$ points, it follows that the alternating group $A_{\deg(t)}$ acts on the $(St(t),t)$-ruin, and moreover, the action is transitive on the edges of $St(t)$. Again, by Lemma \ref{vanishing}, we obtain that every $(S,t)$-ruin has $L^2$-cohomology concentrated in dimension $1$, and the theorem follows by Corollary \ref{cor:ruins2dim}.

\end{proof}

\subsection{Weighted $L^2$-homology of barycentric subdivisions}
In this section, all Coxeter groups are assumed to be right-angled. Unless otherwise stated, the standing assumption in this section is that $\mathbf{q}\leq\mathbf{1}$. For simplicity, we will use the following notation: if $A$ is a full subcomplex of the nerve $L$, then $$b_i^\mathbf{q}(L):=L^2_\Q b_i(\Sigma(W,S))\text{ and } b_i^\mathbf{q}(A):=L^2_\Q b_i(W_L\Sigma_A).$$

Let $L$ be a regular CW-complex and let $bL$ denote its barycentric subdivision. Since $bL$ is a flag complex, there is a natural right-angled Coxeter group associated to $bL$ denoted by $W_{bL}$. The vertices of $bL$ are naturally graded by ``dimension'': each vertex $v$ is the barycenter of a unique cell $\sigma_v$. The dimension of $v$ is the dimension of $\sigma_v$. For a $k$-cell $\sigma^k$, we denote its boundary complex by $\partial\sigma^k$. We begin with the following set of lemmas.

\begin{lemma}[{\cite[Lemma 2.1]{DO3}}]\label{Links}
Let $bL$ be the barycentric subdivision of a regular CW-complex $L$, and let $v$ be a vertex of dimension $k$. Then $$\Link_{bL}(v)=b\partial\sigma_v^k\ast b\Link_L \sigma_v^k.$$
\end{lemma}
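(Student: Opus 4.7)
The plan is to unwind the definitions and match simplices on both sides via the poset of cells of $L$.

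First, I would recall that since $L$ is a regular CW-complex, its barycentric subdivision $bL$ is the order complex of the face poset of $L$: vertices of $bL$ correspond bijectively to cells $\tau$ of $L$, and a $j$-simplex of $bL$ is a strictly increasing chain $\tau_0 < \tau_1 < \cdots < \tau_j$ of cells. Under this identification the vertex $v$ of the lemma corresponds to the cell $\sigma_v^k$.

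Next, I would describe the link. A simplex of $\Link_{bL}(v)$ is a chain $\tau_0 < \cdots < \tau_j$ of cells of $L$, none equal to $\sigma_v^k$, such that inserting $\sigma_v^k$ into the chain still yields a strictly increasing chain. Every such chain splits uniquely as a concatenation of a (possibly empty) piece $\tau_0 < \cdots < \tau_i$ consisting of proper faces of $\sigma_v^k$ and a (possibly empty) piece $\tau_{i+1} < \cdots < \tau_j$ consisting of proper cofaces of $\sigma_v^k$.

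Then I would identify each piece with a simplex from the appropriate factor. The proper faces of $\sigma_v^k$ are exactly the cells of $\partial\sigma_v^k$, so the first piece is a simplex of $b\partial\sigma_v^k$. For the second piece, I would invoke the defining property of regular CW-complexes: the cells of $L$ strictly containing $\sigma_v^k$, ordered by inclusion, are in order-preserving bijection with the cells of $\Link_L\sigma_v^k$ (this is the point where regularity of $L$ is used, so that $\Link_L\sigma_v^k$ is itself a regular CW-complex whose face poset matches $\mathcal{P}_{>\sigma_v^k}$). Hence the second piece is a simplex of $b\Link_L\sigma_v^k$.

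Finally, I would observe that the two pieces can be chosen entirely independently: any chain of proper faces of $\sigma_v^k$ together with any chain of proper cofaces concatenates to a valid chain in $\Link_{bL}(v)$. This freedom to combine simplices from two disjoint complexes is precisely the definition of the simplicial join, giving $\Link_{bL}(v) = b\partial\sigma_v^k * b\Link_L\sigma_v^k$. The only nontrivial ingredient is the order-isomorphism between cofaces of $\sigma_v^k$ in $L$ and cells of $\Link_L\sigma_v^k$; everything else is bookkeeping on posets.
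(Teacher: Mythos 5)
Your proof is correct and follows the standard argument: the paper gives no proof of its own, simply citing \cite[Lemma 2.1]{DO3}, and the argument there is the same identification of $bL$ with the order complex of the face poset of $L$, with the link of $v$ splitting as the join of the order complexes of the posets of proper faces and proper cofaces of $\sigma_v^k$. You correctly isolate the one place regularity is needed (the order-isomorphism between the cofaces of $\sigma_v^k$ and the face poset of $\Link_L\sigma_v^k$); there are no gaps.
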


\begin{remark}\label{LinksRemark}
Suppose that $L$ is a PL-cellulation of an $(n-1)$-manifold, then $\Link_{bL}(v)=S^{k-1}\ast S^{n-2-k},$ where $S^{k-1}$ and $S^{n-2-k}$ are barycentric subdivisions of PL-cellulations of spheres of corresponding dimensions.
\end{remark}

\begin{lemma}[{\cite[Lemma 3.2]{DO3}}]\label{AffectingLinks}
Let $bL$ be the barycentric subdivision of a regular CW-complex $L$, and let $v$ be a vertex of dimension $k$. The removal of $v$ from $bL$ does not change the first factor in the decomposition of Lemma \ref{Links} of links for vertices of dimension less or equal than $k$, and it does not change the second factor for vertices of dimension greater or equal than $k$.
\end{lemma}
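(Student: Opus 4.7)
The plan is to analyze, for each vertex $w$ of $bL$ of dimension $j$, which join factor of the decomposition
$$\Link_{bL}(w) = b\partial\sigma_w^j \ast b\Link_L \sigma_w^j$$
from Lemma \ref{Links} a given neighboring vertex belongs to, and then read off which factor is disturbed by deleting $v$. The key observation is dimension bookkeeping: vertices of the first factor $b\partial\sigma_w^j$ are barycenters of proper faces of $\sigma_w$, hence all have dimension strictly less than $j$, while vertices of the second factor $b\Link_L \sigma_w^j$ are barycenters of cells of $L$ properly containing $\sigma_w$, hence all have dimension strictly greater than $j$.

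First I would note that a vertex $v$ of dimension $k$ with $v \neq w$ lies in $\Link_{bL}(w)$ precisely when $\sigma_v$ and $\sigma_w$ are comparable cells of $L$. Among such $v$, those in the first factor are exactly the ones with $\sigma_v \subsetneq \sigma_w$, while those in the second factor are exactly the ones with $\sigma_v \supsetneq \sigma_w$. In particular, membership in the first factor forces $k < j$, and membership in the second factor forces $k > j$.

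From here the lemma is immediate by a case split on the ordering of $j$ and $k$. If $j \leq k$, then $v$ cannot belong to the first factor of $\Link_{bL}(w)$: either $j < k$, and $v$ is too high-dimensional to sit in a factor whose vertices all have dimension $< j$; or $j = k$, in which case $\sigma_v$ and $\sigma_w$ are distinct cells of the same dimension in a regular CW-complex, hence incomparable, so $v \notin \Link_{bL}(w)$ altogether. Either way, deleting $v$ leaves the first factor untouched. The completely symmetric argument, applied when $j \geq k$, shows that the second factor is untouched.

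There is no real obstacle here; the only subtlety worth flagging is the equality case $j = k$, where one must invoke the (standard) fact that a regular CW-complex contains no two distinct comparable cells of the same dimension. Beyond that, the proof is a pure dimension count against the description of the two join factors.
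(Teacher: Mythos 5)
Your proof is correct and is essentially the standard argument: the paper itself offers no proof (it simply cites \cite[Lemma 3.2]{DO3}), and the proof there is the same dimension count showing that vertices of the first join factor of $\Link_{bL}(w)$ are barycenters of cells of dimension strictly less than $\dim w$ while those of the second factor have dimension strictly greater. Your handling of the equality case via incomparability of distinct same-dimensional cells in a regular CW-complex is the right (and only) point needing care.
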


\begin{lemma}\label{HomologyAffectedLink}
Suppose that $A$ is a full subcomplex of a triangulation of $S^m$, where $m\leq 3$. If Conjecture \ref{conj:weightedSinger} holds for $S^p$, then $b_i^\mathbf{q}(S^p\ast A)=0$ for $i>\frac{p+m}{2}+1$.
\end{lemma}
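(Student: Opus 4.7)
The plan is to exploit the fact that the nerve is a join, which in the right-angled setting causes both the Coxeter group and the Davis complex to split as products; then the weighted Künneth formula reduces the statement to separate vanishing bounds on each factor.

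First I would observe that, because $W$ is right-angled and the nerve is $S^p\ast A$, every generator coming from $S^p$ commutes with every generator coming from $A$, so $W_{S^p\ast A}=W_{S^p}\times W_A$ and correspondingly
\[
\Sigma(W_{S^p\ast A})\;=\;\Sigma(W_{S^p})\times \Sigma(W_A).
\]
The multiparameter $\mathbf{q}$, being constant on conjugacy classes, restricts to multiparameters on each factor, and the identification $b_j^{\mathbf{q}}(A)=L^2_{\mathbf{q}}b_j(\Sigma(W_A))$ follows from the standard induction relation $L^2_{\mathbf{q}}b_j(W_L\Sigma_A)=L^2_{\mathbf{q}}b_j(\Sigma(W_A))$ for full subcomplexes.

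Second, I would invoke the weighted Künneth formula (Lemma~\ref{lemma:propertiesbetti}) to write
\[
b_k^{\mathbf{q}}(S^p\ast A)\;=\;\sum_{i+j=k} b_i^{\mathbf{q}}(S^p)\cdot b_j^{\mathbf{q}}(A).
\]
The hypothesis that the Weighted Singer Conjecture holds for $S^p$ supplies $b_i^{\mathbf{q}}(S^p)=0$ whenever $i>(p+1)/2$. For the second factor, since $m\le 3$ and $A$ is a full subcomplex of a triangulation of $S^m=S^{(m+1)-1}$, Theorem~\ref{RAGeneralizedSinger} applies directly to yield $b_j^{\mathbf{q}}(A)=0$ whenever $j>(m+1)/2$.

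Third, I would finish with a short combinatorial check: if $k>(p+m)/2+1$, then no pair $(i,j)$ with $i+j=k$ can simultaneously satisfy both $i\le(p+1)/2$ and $j\le(m+1)/2$, as these together force $k\le(p+m)/2+1$. So every summand in the Künneth expansion contains at least one vanishing factor, and hence $b_k^{\mathbf{q}}(S^p\ast A)=0$ for all such $k$. There is no real obstacle here beyond verifying the applicability of Künneth and the bookkeeping with $\mathbf{q}$; the argument is essentially a ``Singer plus Singer for $A$'' packaging.
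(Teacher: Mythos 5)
Your proposal is correct and is essentially the paper's own argument: both deduce $b_i^{\mathbf{q}}(S^p)=0$ for $i>\frac{p+1}{2}$ from the hypothesis, deduce $b_j^{\mathbf{q}}(A)=0$ for $j>\frac{m+1}{2}$ from Theorem \ref{RAGeneralizedSinger} (with $n=m+1\leq 4$), and conclude via the weighted K\"unneth formula. You simply spell out the product decomposition and the index bookkeeping that the paper leaves implicit.
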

\begin{proof}
By Theorem \ref{RAGeneralizedSinger}, $b_i^\mathbf{q}(A)=0$ for all $i>\frac{m+1}{2}$. Since Conjecture \ref{conj:weightedSinger} holds for $S^p$, it follows that $b_i^\mathbf{q}(S^p)=0$ for $i>\frac{p+1}{2}$. The assertion follows by the weighted K\"{u}nneth formula.
\end{proof}

\begin{lemma}\label{RemovingLinks}
Suppose the nerve $L$ is $(n-1)$-dimensional and that $v$ is a $k$-vertex of $bL$. Let $bL-v$ denote the complex obtained by removing the vertex $v$ from $bL$. If $\Link_{bL}(v)$ satisfies the hypothesis of Lemma \ref{HomologyAffectedLink}, then $$b_i^\mathbf{q}(bL)=b_i^\mathbf{q}(bL-v)\text{ for } i>\frac{n}{2}.$$
\end{lemma}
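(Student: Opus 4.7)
The plan is to run a Mayer--Vietoris argument in weighted $L^2$-cohomology for the covering
\[
bL = (bL - v) \cup \St_{bL}(v), \qquad (bL-v) \cap \St_{bL}(v) = \Link_{bL}(v),
\]
of $bL$ by full subcomplexes. Pushing this decomposition into the Davis complex of $W_{bL}$ and invoking the standard weak exactness in the Davis--Okun framework (cf.\ \cite{DDJO}, \cite{DO3}) should produce a weakly exact sequence
\[
\cdots \to b_i^\mathbf{q}(\Link(v)) \to b_i^\mathbf{q}(bL-v) \oplus b_i^\mathbf{q}(\St(v)) \to b_i^\mathbf{q}(bL) \to b_{i-1}^\mathbf{q}(\Link(v)) \to \cdots,
\]
so the statement will reduce to showing that $b_i^\mathbf{q}(\Link(v))$, $b_{i-1}^\mathbf{q}(\Link(v))$, and $b_i^\mathbf{q}(\St(v))$ all vanish for $i > n/2$.

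First I would handle the link. By hypothesis $\Link_{bL}(v)$ has the form $S^p \ast A$ needed to invoke Lemma \ref{HomologyAffectedLink}, giving $b_i^\mathbf{q}(\Link(v)) = 0$ for $i > \tfrac{p+m}{2}+1$. Because $bL$ is $(n-1)$-dimensional we have $\dim\Link_{bL}(v) \leq n-2$, hence $p+m+1\leq n-2$ and $\tfrac{p+m}{2}+1 \leq \tfrac{n-1}{2}$. Thus for any $i > n/2$ both $b_i^\mathbf{q}(\Link(v))$ and $b_{i-1}^\mathbf{q}(\Link(v))$ vanish.

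Next I would dispatch the star. Since $bL$ is flag, $\St_{bL}(v) = \{v\}\ast\Link_{bL}(v)$, so the associated special subgroup splits as $W_{\St(v)} = W_v \times W_{\Link(v)}$ with $\Sigma(W_{\St(v)}) \cong \Sigma(W_v)\times\Sigma(W_{\Link(v)})$. Because $\mathbf{q}\leq\mathbf{1}$ puts $q_v$ in the region of convergence of $W_v$, Lemma \ref{lemma:propertiesbetti} says the weighted Betti numbers of $\Sigma(W_v)$ are concentrated in degree $0$, and the weighted Künneth formula collapses to $b_i^\mathbf{q}(\St(v)) = \tfrac{1}{1+q_v}\, b_i^\mathbf{q}(\Link(v))$. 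The previous step then forces $b_i^\mathbf{q}(\St(v)) = 0$ for $i > n/2$.

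Feeding these three vanishings into the Mayer--Vietoris sequence will yield $b_i^\mathbf{q}(bL)\cong b_i^\mathbf{q}(bL-v)$ for all $i > n/2$. The main obstacle I anticipate is not any of the above dimension bookkeeping but setting up the Mayer--Vietoris sequence cleanly in the weighted $L^2$ setting on $\Sigma(W_{bL})$: one must check that the $W_{bL}$-orbits of $\Sigma_{bL-v}$ and $\Sigma_{\St(v)}$ genuinely cover $\Sigma(W_{bL})$ with intersection the orbit of $\Sigma_{\Link(v)}$, which boils down to the identity $W_L\Sigma_A\cap W_L\Sigma_B = W_L\Sigma_{A\cap B}$ for full subcomplexes $A,B\subseteq L$.
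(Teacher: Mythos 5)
Your argument is the same as the paper's: decompose $bL$ as $(bL-v)\cup\St_{bL}(v)$ with intersection $\Link_{bL}(v)$, kill the link terms via Lemma \ref{HomologyAffectedLink} (the dimension count $\tfrac{p+m}{2}+1\leq\tfrac{n-1}{2}$ matches the paper's), kill the star/cone via the weighted K\"unneth formula, and conclude by weak exactness of Mayer--Vietoris. The covering identity you flag at the end is the standard fact for full subcomplexes already used implicitly in \cite{DDJO} and \cite{DO3}, so there is no real obstacle there.
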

\begin{proof}
Set $\Lk(v)=\Link_{bL}(v)$ and let $C\Lk(v)$ denote the right-angled cone on $\Lk(v)$. Consider the Mayer-Vietoris sequence:
 $$\begin{tikzcd}[column sep=small]
  L^2_\mathbf{q}H_\ast(\Sigma_{\Lk(v)}) \arrow{r} & L^2_\mathbf{q}H_\ast(\Sigma_{C\Lk(v)}) \oplus L^2_\mathbf{q}H_\ast(\Sigma_{bL-v}) \arrow{r} & L^2_\mathbf{q}H_\ast(\Sigma_{bL})
\end{tikzcd}$$

By assumption, $L^2_\mathbf{q}H_\ast(\Sigma_{\Lk(v)})=0$ for $\ast>\frac{n-1}{2}$, and hence by the weighted K\"{u}nneth formula, $L^2_\mathbf{q}H_\ast(\Sigma_{C\Lk(v)})=0$ for $\ast>\frac{n-1}{2}$. The lemma follows from the weak exactness of the Mayer-Vietoris sequence.
\end{proof}

In \cite{DO3}, Davis and Okun computed the ordinary $L^2$-Betti numbers of the right-angled Coxeter group $W_{bL}$ where $L$ is a PL-cellulation of an $(n-1)$-manifold for $n = 6,8$: they show that they always vanish above the middle dimension. We prove the analogous statements for weighted $L^2$-cohomology.

\begin{theorem}\label{WeightedManifold}
Suppose that $L$ is a PL-cellulation of an $(n-1)$-manifold with $n=6,8$. Then $b_i^\mathbf{q}(bL)=0$ for $i>\frac{n}{2}$.
\end{theorem}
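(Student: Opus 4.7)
The plan is to adapt the argument of Davis--Okun \cite[Theorem 3.1]{DO3} to the weighted setting. The main tool is Lemma \ref{RemovingLinks}, which we iteratively apply to remove vertices of $bL$ in a carefully chosen order, each time verifying that the current link of the target vertex satisfies the hypothesis of Lemma \ref{HomologyAffectedLink}. Each valid removal preserves $b_i^{\mathbf{q}}(bL)$ for $i > n/2$, and the goal is to reduce $bL$ to a subcomplex whose $b_i^{\mathbf{q}}$ trivially vanishes in this range.

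By Remark \ref{LinksRemark}, a vertex $v$ of dimension $k$ in $bL$ has link $\Link_{bL}(v) = bS^{k-1} \ast bS^{n-2-k}$, a join of barycentric subdivisions of PL spheres. Lemma \ref{HomologyAffectedLink}, together with the known cases of the Weighted Singer Conjecture (for $S^p$ with $p \leq 3$ in the right-angled setting, by DDJO) and Theorem \ref{RAGeneralizedSinger} (for full subcomplexes of PL $S^m$ with $m \leq 3$), yields the needed vanishing $b_i^{\mathbf{q}}(\Link_{bL}(v)) = 0$ for $i > (n-1)/2$ whenever the link decomposes as $S^p \ast A$ with $p \leq 3$ and $A$ a full subcomplex of a PL $S^m$ with $m \leq 3$. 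For $n = 6$, this directly covers all intermediate vertices $k = 1, 2, 3, 4$; for $n = 8$, only $k = 3, 4$ qualify immediately.

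For $n = 6$, the plan is to remove the $k = 1, 2, 3, 4$ vertices in order of increasing $k$. By Lemma \ref{AffectingLinks}, at each step the current link of any remaining intermediate target vertex retains one factor as a full PL sphere of dimension $\leq 3$ (the factor left unchanged by the preceding removals, which are all of strictly smaller dimension), while the other factor is a full subcomplex of a PL sphere of dimension $\leq 3$; hence Lemma \ref{HomologyAffectedLink} continues to apply. After these removals, the residual subcomplex contains only $k = 0$ and $k = 5$ vertices, and since any simplex of $bL$ is a chain in the face poset of $L$, and such a chain using only $0$- and $5$-dimensional cells has at most two elements, the resulting complex is a $1$-dimensional bipartite graph. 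Its Davis complex is therefore at most $2$-dimensional, so $b_i^{\mathbf{q}} = 0$ for $i \geq 3$, and in particular for $i > n/2 = 3$.

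For $n = 8$, the same strategy applies with a more delicate ordering. The plan is to first remove the directly-handled $k = 3, 4$ vertices, then $k = 2, 5$, and finally $k = 1, 6$; by Lemma \ref{AffectingLinks}, the initial removals progressively reduce the dimension of the ``problematic'' sphere factor (originally $bS^4$ or $bS^5$) appearing in the current link of the later target vertices, eventually bringing them within the scope of Lemma \ref{HomologyAffectedLink}. Once all intermediate vertices are removed, only the extremal $k = 0$ and $k = 7$ vertices remain, again forming a $1$-dimensional bipartite graph whose Davis complex is $2$-dimensional, so $b_i^{\mathbf{q}} = 0$ for $i \geq 3$, hence for $i > n/2 = 4$. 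The main obstacle will be the $n = 8$ combinatorial bookkeeping: verifying that at each stage the modified link factor can be recast as a full subcomplex of a PL sphere of dimension $\leq 3$, so that the hypothesis of Lemma \ref{HomologyAffectedLink} continues to be met as we peel off successively harder layers of vertices.
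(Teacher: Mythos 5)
Your $n=6$ argument is sound and is only a cosmetic variant of the paper's: the paper removes the dimension $4,3,1$ vertices and stops at a $2$-complex, while you also remove dimension $2$ and stop at a bipartite graph; both orderings keep every link of the form $S^p \ast A$ with $p\le 3$ and $A$ full in a sphere of dimension $\le 3$, so Lemmas \ref{HomologyAffectedLink} and \ref{RemovingLinks} apply throughout. The gap is in the $n=8$ case, and it is a real one: the vertices of dimension $2$ and $5$ \emph{cannot} be removed with the tools available. Their links contain a PL $4$-sphere factor ($S^1 \ast bS^4$ and $bS^4 \ast S^1$ respectively), and neither Conjecture \ref{conj:weightedSinger} for $S^4$ (the open $n=5$ case) nor Theorem \ref{RAGeneralizedSinger} for full subcomplexes of $S^4$ is known. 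Your claim that the earlier removals ``progressively reduce the dimension of the problematic sphere factor'' is where the argument breaks: by Lemma \ref{AffectingLinks}, deleting the dimension $3$ and $4$ vertices turns the $bS^4$ factor in the link of a dimension-$2$ vertex into a \emph{proper full subcomplex of} $bS^4$ --- it does not become a full subcomplex of any sphere of dimension $\le 3$, and the hypotheses of Lemma \ref{HomologyAffectedLink} are about the ambient sphere, not the dimension of the subcomplex. Concretely, to remove a dimension-$2$ vertex at that stage you would need, via the K\"unneth formula, $b_3^{\mathbf{q}}(A)=0$ for $A$ a $2$-dimensional full subcomplex of $bS^4$, which is exactly an unknown instance of the generalized conjecture. (A secondary issue: deferring the $k=6$ vertices to the end also spoils the one tool that does handle them, namely the $n=6$ case of the theorem applied to the intact $bS^5$ factor; after earlier removals that factor is no longer a full barycentric subdivision.)

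What the paper does instead is to \emph{stop removing}: for $n=8$ only the dimension $6$, $4$, $3$ vertices are deleted (the $k=6$ links $bS^5\ast S^0$ being handled by bootstrapping the already-proved $n=6$ case), leaving a $4$-dimensional complex $T$ whose Davis complex is $5$-dimensional. The only Betti number above $n/2=4$ that survives for dimension reasons is $b_5^{\mathbf{q}}(T)$, and this is killed by a mechanism absent from your plan: the unweighted Davis--Okun computation \cite[Theorem 3.1]{DO3} gives $b_5^{\mathbf{1}}(T)=b_5^{\mathbf{1}}(bL)=0$, and the top-dimension monotonicity statement (Lemma \ref{Pushing0}) then yields $b_5^{\mathbf{q}}(T)=0$ for all $\mathbf{q}\le\mathbf{1}$. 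Without this final step --- or some substitute for removing the dimension $2$ and $5$ vertices --- the $n=8$ case of your proposal does not close.
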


\begin{proof}
We first consider the case where $n=6$. By Lemma \ref{Links}, vertices of dimension $3$ and $1$ have respective links $S^2\ast S^{1}$ and $S^0\ast S^{3}$, and the link of a vertex of dimension $4$ is $S^3\ast S^{0}$ (all spheres involved are barycentric subdivisions of PL-cellulations of spheres). By the validity of Conjecture \ref{conj:weightedSinger} for $n\leq 4$, these links satisfy the hypothesis of Lemma \ref{HomologyAffectedLink}. We obtain a $2$-complex $T$ by removing vertices of $L$ of dimensions $4$, $3$, $1$ (in that order). Note that Lemma \ref{AffectingLinks} implies that all of the first factors of the join decompositions of the links are not affected. By Lemma \ref{RemovingLinks}, $b_i^\mathbf{q}(bL)=b_i^\mathbf{q}(T)$ for $i>3$. Since $T$ is $2$-dimensional, the theorem follows.

For the case $n=8$, we invoke a similar procedure to obtain a $4$-complex $T$. We first begin by removing vertices of dimension $6$ (with respective links in $bL$ equal to $S^5\ast S^{0}$), then vertices dimension $4$ (with respective links in $bL$ equal to $S^3\ast S^{2}$), and finally of dimension $3$ (with respective links in $bL$ equal to $S^2\ast S^{3}$). By the previous computation for the case $n=6$ and the validity of Conjecture \ref{conj:weightedSinger} for $n\leq 4$, we have that, at each stage of removing a vertex, the remaining affected links satisfy Lemma \ref{HomologyAffectedLink}. Hence, Lemma \ref{RemovingLinks} implies that $b_i^\mathbf{q}(bL)=b_i^\mathbf{q}(T)$ for $i>4$.

 \cite[Theorem 3.1]{DO3} implies that $b_5^\mathbf{1}(T)=b_5^\mathbf{1}(bL)=0$. Since $\Sigma_{T}$ is $5$-dimensional, Lemma \ref{Pushing0} implies that $b_5^\mathbf{q}(T)=0$ for $\mathbf{q}\leq\mathbf{1}$, thus completing the proof of the assertion.
\end{proof}

\begin{remark}
In fact, the statement of Theorem \ref{WeightedManifold} holds when $L$ is assumed to be a PL-cell complex that is a generalized homology manifold. The point is that, given a $k$-vertex, its link  $\Link_{bL}(v)=S^{k-1}\ast GHS^{n-2-k},$ where $S^{k-1}$ is a $(k-1)$-sphere and $GHS^{n-2-k}$ is a generalized homology $(n-2-k)$-sphere. Moreover, Theorem \ref{RAGeneralizedSinger} and Conjecture \ref{conj:weightedSinger} (for $n\leq 4$) remain true when $L$ is assumed to be a generalized homology sphere. \end{remark}

In fact, if we assume $L$ to be a PL-cellulation of a sphere, we get the following corollary.

\begin{corollary}
Suppose that $L$ is a PL-cellulation of $S^{n-1}$. Then Conjecture \ref{conj:weightedSinger} is true for $W_{bL}$ for all even $n\leq 8$.
\end{corollary}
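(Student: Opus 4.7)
The plan is to establish the Weighted Singer Conjecture for $W_{bL}$ by a case analysis on the even values $n \in \{2,4,6,8\}$, with almost all of the work consolidated into invocations of results already proved. First I would observe that since $L$ is a PL-cellulation of $S^{n-1}$, the barycentric subdivision $bL$ is a flag simplicial triangulation of $S^{n-1}$, so the Davis complex $\Sigma(W_{bL})$ is a contractible $n$-manifold. The target is to show $L^2_{\mathbf{q}} H_i(\Sigma(W_{bL})) = 0$ for $i > n/2$ and $\mathbf{q} \leq \mathbf{1}$.

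For the low-dimensional cases $n = 2$ and $n = 4$, I would simply cite the previously established Weighted Singer Conjecture for right-angled Coxeter groups: the case $n = 2$ is \cite[Theorem 2.1]{Dymara}, and the case $n = 4$ was proved in \cite{DDJO}. Since $bL$ is a flag triangulation of the appropriate sphere, these cases are immediate.

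For $n = 6$ and $n = 8$, the content is exactly the statement of Theorem \ref{WeightedManifold}, which asserts $b_i^\mathbf{q}(bL) = 0$ for $i > n/2$ whenever $L$ is a PL-cellulation of an $(n-1)$-manifold with $n = 6, 8$. Specializing to the manifold $S^{n-1}$ gives the conclusion directly.

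There is essentially no obstacle to overcome: the corollary is a consolidation of Theorem \ref{WeightedManifold} with prior low-dimensional results. The only conceptual point worth flagging is the observation that the barycentric subdivision of a PL-cellulation of $S^{n-1}$ is a flag triangulation of $S^{n-1}$, so that the resulting right-angled Coxeter group is covered by the hypothesis of the Weighted Singer Conjecture.
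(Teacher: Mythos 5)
Your proposal is correct and matches the paper's (implicit) argument exactly: the cases $n=6,8$ are precisely Theorem \ref{WeightedManifold} applied to $S^{n-1}$, and the cases $n=2,4$ follow from the previously known validity of Conjecture \ref{conj:weightedSinger} for right-angled Coxeter groups in dimensions $\leq 4$, since $bL$ is a flag triangulation of $S^{n-1}$.
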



\begin{thebibliography}{99}\thispagestyle{myheadings}
\pagestyle{headings} \thispagestyle{headings}






\bibitem{Davis} M.W. Davis, \emph{The Geometry and Topology of Coxeter Groups}, Princeton University Press, Princeton, 2007.

\bibitem{Davis1} M.W. Davis, \emph{Groups generated by reflections and aspherical manifolds not covered by Euclidean space}, Ann. of Math. 117 (1983) 293-324.

\bibitem{DDJO} M.W. Davis, J. Dymara, T. Januszkiewicz, B. Okun, \emph{Weighted $L^2$--cohomology of Coxeter groups}, Geometry \& Topology 11 (2007), 47-138.

\bibitem{do} M.W. Davis and B. Okun, \emph{Vanishing theorems and conjectures for the $\ell^2$--homology of right-angled Coxeter groups}, Geometry \& Topology, 5:7-74, 2001.

\bibitem{DO2} M.W. Davis and B. Okun, \emph{Cohomology computations for Artin groups, Bestvina--Brady groups, and graph products}, Groups Geom. Dyn. 6 (3) 2012, pp. 485-53.

\bibitem{DO3} M.W. Davis and B. Okun, \emph{$L^2$-homology of right-angled Coxeter groups associated to barycentric subdivisions}, Topology and Its Applications 140 (2004), 197-202.


\bibitem{Dymara} J. Dymara, \emph{Thin buildings}, Geometry \& Topology, 10:667-694, 2006.

\bibitem{eckmann} B. Eckmann, \emph{Introduction to $\ell^2$-methods in topology: reduced $\ell^2$-homology, harmonic chains, and $\ell^2$-Betti numbers} Israeli Journal of Mathematics 117 2000, pp. 183-219


\bibitem{LottLuck} J. Lott and W. L\"{u}ck, \emph{$L^2$--topological invariants of $3$--manifolds}, Invent. Math. 120 (1995), 15-60.

\bibitem{Luckbook} W. L\"{u}ck, \emph{$L^2$--invariants: theory and applications to geometry and $K$--theory}. Ergeb Math Grenzgeb. (3) 44, Springer--Verlag, Berlin 2002. Zbl 1009.55001 MR 1926649.

\bibitem{wm} W. Mogilski, \emph{The weighted Singer conjecture for Coxeter groups in dimensions three and four}, arXiv:1503.02518 [math.AT] (2015).

\bibitem{wm2} W. Mogilski, \emph{The fattened Davis complex and weighted $L^2$-(co)homology of Coxeter groups}, arXiv:1502.07783 [math.AT] (2015).

\bibitem{os14} B. Okun and K. Schreve, \emph{The $L^2$--(co)homology of groups with hierarchies}, arXiv:1407.1340 [math.GT] (2014).

\bibitem{p02} G.Perelman, \emph{The entropy formula for the {R}icci flow and its geometric applications}, arXiv:math/0211159jd
 (2002).

\bibitem{p03} G. Perelman, \emph{{R}icci flow with surgery on three-manifolds}, arXiv:math/0303109 (2003).


\bibitem{ks} K. Schreve, \emph{The Strong Atiyah Conjecture for virtually cocompact special groups}, Mathematische Annalen 317 (2014), 629-636.

\bibitem{Schroeder} T.A. Schroeder, \emph{The $\ell^2$--homology of even Coxeter groups}, Algebraic \& Geometric Topology, 9(2):1089-1104, 2009. DOI number: 10.2140/agt.2009.9.1089.


\end{thebibliography}
\end{document}